\theoremstyle{plain}
\newtheorem{thm}{\protect\theoremname}
  \theoremstyle{plain}
  \newtheorem{lem}[thm]{\protect\lemmaname}
  \theoremstyle{plain}
  \newtheorem{cor}[thm]{\protect\corollaryname}
\numberwithin{equation}{section}
\DeclareMathAlphabet{\mathcal}{OMS}{cmsy}{m}{n}
  \providecommand{\corollaryname}{Corollary}
  \providecommand{\lemmaname}{Lemma}
\providecommand{\theoremname}{Theorem}
  \providecommand{\corollaryname}{Corollary}
  \providecommand{\lemmaname}{Lemma}
\providecommand{\theoremname}{Theorem}
\begin{document}

\title{Optimal probabilities and controls for reflecting diffusion processes}

\author{Zhongmin Qian\thanks{Mathematical Institute, University of Oxford, United Kingdom. Email:
qianz@maths.ox.ac.uk} \ and\ Xingcheng Xu\thanks{School of Mathematical Sciences, Peking University, Beijing, China;
Current address: Mathematical Institute, University of Oxford, United
Kingdom. Email: xuxingcheng@pku.edu.cn} \thanks{Xingcheng Xu is supported by China Scholarship Council, Grant No.
201706010019.}}
\maketitle
\begin{abstract}
A solution to the optimal problem for determining vector fields which
maximize (resp. minimize) the transition probabilities from one location
to another for a class of reflecting diffusion processes is obtained
in the present paper. The approach is based on a representation for
the transition probability density functions. The optimal transition
probabilities under the constraint that the drift vector field is
bounded by a constant are studied in terms of the HJB equation. In
dimension one, the optimal reflecting diffusion processes and the
bang-bang diffusion processes are considered. We demonstrate by simulations
that, even in this special case, by considering the nodal set of the
solutions to the HJB equation, the optimal diffusion processes exhibit
an interesting feature of phase transitions. An optimal stochastic
control problem for a class of stochastic control problems involving
diffusion processes with reflection is also solved in the same spirit. 
\end{abstract}
\vspace{5mm}
 \hspace{11mm}\textbf{Keywords:} Reflecting diffusion, Comparison
theorem, Optimal transition probability density,

\hspace{5mm}Cameron-Martin formula, Stochastic optimal control. \vspace{5mm}

\hspace{5mm}\textbf{MSC(2010):} Primary: 60H10, 60H30; Secondary:
49J30, 93E20.

\section{Introduction}

The simple optimal control problem to determine vector fields $b(t,x)$
bounded by a constant $\kappa\geq0$ which maximize (resp. minimize)
the probability $p_{b}(s,x;t,y)$ of diffusion processes
\begin{equation}
dX_{t}=b(t,X_{t})dt+dB_{t}\label{eq:d1}
\end{equation}
started at $X_{s}=x$ and ended at $X_{t}=y$ (where $B=(B_{t})_{t\geq0}$
is a Brownian motion ) has been considered and solved explicitly in
the previous work \cite{KS84,KS85,QZ02,QRZ03,QZ04}. The method utilized
in \cite{QZ02,QRZ03} is quite elementary and is based on the density
version of the Cameron-Martin formula
\begin{equation}
p_{b+c}(s,x;t,y)=p_{b}(s,x;t,y)+\int_{s}^{t}\mathbb{E}_{s,x}\left\{ R_{s,r}c(r,X_{r})\cdot\nabla_{x}p_{b}(r,X_{r};t,y)\right\} dr\label{eq:rep1}
\end{equation}
for $0\leq s<t$, where $p_{b}(s,x;t,y)$ denotes the transition probability
density of $X_{t}$ defined by (\ref{eq:d1}) under the condition
that $X_{s}=x$ with respect to the Lebesgue measure. Here $b(t,x)$
and $c(t,x)$ are two vector fields with at most linear growth, $(X_{t},\mathbb{P}_{s,x})$
is the weak solution to (\ref{eq:d1}) in the sense of Stroock-Varadhan's
article \cite{SW1971-boundary}, and $R_{s,r}$ is the Cameron-Martin
density process
\begin{equation}
R_{s,t}=\exp\left[\int_{s}^{t}c(r,X_{r})dW_{r}-\frac{1}{2}\int_{0}^{t}|c|^{2}(r,X_{r})dr\right],\label{eq:3.5-1}
\end{equation}
where $W$ is the martingale part of $X$. A simple inspection gives
the optimal solutions $b(t,x)=\pm\kappa(x-y)/|x-y|$, to which an
explicit formula, in dimension one, for $p_{b}(s,x;t,y)$ is given
in \cite{KS84,QZ02}. 

The question becomes difficult if we consider the simple optimal control
problem for diffusion processes with barriers, which arise from many
stochastic optimization problems for example in pricing problems for
options. 

Let $G\subseteq\mathbb{R}^{n}$ be a domain with a smooth boundary
$\partial G$, and $\bar{G}$ denote its closure. We wish to locate
a vector field $b(t,x)$ (for $t\geq0$ and $x\in\bar{G}$) bounded
by $\kappa$, which maximizes (resp. minimizes) the probability $q_{b}(s,x;t,y)$
(where $0\leq s<t,$ $x,y\in\bar{G}$) of reflecting diffusion processes
\begin{equation}
dX_{t}=b(t,X_{t})dt+dB_{t}+dL_{t}\label{eq:ref-d2}
\end{equation}
started at $X_{s}=x\in\bar{G}$ and finished at $X_{t}=y\in\bar{G}$,
where $B=(B_{t})$ is a Brownian motion in $\mathbb{R}^{n}$, $L$
is the local time of $X$ with respect to the boundary $\partial G$,
so that $t\rightarrow L_{t}$ increases only on $\left\{ t:X_{t}\in\partial G\right\} $.
In this paper, we are going to establish the following
\begin{thm}
\label{thm:Theorem1}Let $\kappa\geq0$ be a constant. Given $y\in\bar{G}$
and $T>0$. Let $u^{\pm}(t,x)$ (where $t\geq0$ and $x\in\bar{G}$)
be the unique solution to the terminal and boundary problem of the
backward parabolic equation
\begin{equation}
\begin{cases}
\frac{\partial}{\partial t}u+\frac{1}{2}\Delta u\pm\kappa|\nabla u|=0, & \textrm{for}\ 0\leq t<T,\ x\in G\\
\lim_{t\uparrow T}u(t,x)=\delta_{y}(x), & \textrm{for}\ x\in\bar{G}\\
\left.\frac{\partial}{\partial\nu}u(t,\cdot)\right|_{\partial G}=0, & \textrm{for}\ 0\leq t\leq T.
\end{cases}\label{eq:semilinear-PDE-1}
\end{equation}
Define 
\[
b_{\kappa}^{\pm}(t,x)=\pm\kappa\frac{\nabla u^{\pm}(t\wedge T,x)}{\left|\nabla u^{\pm}(t\wedge T,x)\right|}
\]
for $t\geq0$ and $x\in\bar{G}$. Let $q_{b}(s,x;t,y)$ be the transition
probability density of the diffusion defined by (\ref{eq:ref-d2}),
where $b(t,x),$ defined on $[0,\infty)\times\bar{G}$, is a bounded,
Borel measurable vector field such that $|b(t,x)|\leq\kappa$ for
$t\geq0$ and $x\in\bar{G}$. Then
\begin{equation}
q_{b_{\kappa}^{-}}(t,x;T,y)\leq q_{b}(t,x;T,y)\leq q_{b_{\kappa}^{+}}(t,x;T,y)\label{eq:com1}
\end{equation}
for all $0\leq t\leq T$ and $x\in\bar{G}$.
\end{thm}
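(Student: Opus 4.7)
The plan is to mimic the strategy used in \cite{QZ02,QRZ03} for unconstrained diffusions, combining a reflecting-diffusion analogue of the Cameron--Martin density formula \eqref{eq:rep1} with the pointwise optimality of the drift $b_{\kappa}^{\pm}$. I treat only the upper bound; the lower bound follows by the same argument with signs reversed.

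First I would identify $u^{+}$ with the transition density of the optimal process. The reflecting diffusion with drift $b_{\kappa}^{+}$ has transition density $q_{b_{\kappa}^{+}}(t,x;T,y)$ satisfying, as a function of $(t,x)$, the backward Kolmogorov equation
\begin{equation*}
\frac{\partial q}{\partial t}+\frac{1}{2}\Delta q+b_{\kappa}^{+}\cdot\nabla q=0,\qquad\left.\frac{\partial q}{\partial\nu}\right|_{\partial G}=0,\qquad\lim_{t\uparrow T}q(t,\cdot;T,y)=\delta_{y}.
\end{equation*}
By the definition $b_{\kappa}^{+}(t,x)=\kappa\,\nabla u^{+}(t,x)/|\nabla u^{+}(t,x)|$ we have $b_{\kappa}^{+}\cdot\nabla u^{+}=\kappa|\nabla u^{+}|$, so $u^{+}$ solves exactly this same initial-boundary value problem. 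Uniqueness of the backward parabolic problem with Neumann data then gives $u^{+}(t,x)=q_{b_{\kappa}^{+}}(t,x;T,y)$, and in particular $\nabla_{x}q_{b_{\kappa}^{+}}(r,x;T,y)=\nabla u^{+}(r,x)$.

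Second, for any bounded Borel drift $b$ with $|b|\le\kappa$, I would apply the reflecting-diffusion analogue of \eqref{eq:rep1} with $b_{\kappa}^{+}$ as the base drift and $c:=b-b_{\kappa}^{+}$ as the perturbation, giving
\begin{equation*}
q_{b}(t,x;T,y)=q_{b_{\kappa}^{+}}(t,x;T,y)+\int_{t}^{T}\mathbb{E}_{t,x}\!\left[R_{t,r}\,c(r,X_{r})\cdot\nabla_{x}q_{b_{\kappa}^{+}}(r,X_{r};T,y)\right]dr,
\end{equation*}
where $X$ is the reflecting diffusion driven by $b_{\kappa}^{+}$ and $R$ is the corresponding Cameron--Martin exponential. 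Substituting the identification from the previous step and computing pointwise,
\begin{equation*}
c(r,X_{r})\cdot\nabla u^{+}(r,X_{r})=b(r,X_{r})\cdot\nabla u^{+}(r,X_{r})-\kappa\,|\nabla u^{+}(r,X_{r})|\le 0
\end{equation*}
by Cauchy--Schwarz and $|b|\le\kappa$. Since $R_{t,r}>0$, the whole integrand is non-positive and $q_{b}\le u^{+}=q_{b_{\kappa}^{+}}$. The lower inequality against $q_{b_{\kappa}^{-}}$ follows by the mirror argument, for then $c\cdot\nabla u^{-}=b\cdot\nabla u^{-}+\kappa|\nabla u^{-}|\ge 0$.

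The main obstacle is the rigorous justification of the Cameron--Martin density identity in the reflecting setting. Unlike the free case \eqref{eq:rep1}, the semimartingale decomposition of $X$ carries the local-time term $dL_{r}$, so the Girsanov change of measure must be carried out carefully along the martingale part $W$, and one must verify that any integration-by-parts against $\nabla_{x}q_{b_{\kappa}^{+}}$ picks up no boundary flux (which vanishes precisely because $\partial_{\nu}u^{+}=0$). Two subsidiary technicalities must also be handled: existence and uniqueness of the reflecting diffusion with the merely Borel drift $b_{\kappa}^{+}$ (undefined on the nodal set $\{\nabla u^{+}=0\}$), which should follow from a Stroock--Varadhan/Lions--Sznitman martingale-problem argument; and sufficient regularity of $u^{+}$ up to $\partial G$ and up to $t=T$ to identify $\nabla_{x}q_{b_{\kappa}^{+}}$ with $\nabla u^{+}$ pointwise inside the integral, which one obtains by approximating $\delta_{y}$ with smooth terminal data and passing to the limit.
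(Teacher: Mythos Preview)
Your proposal is correct and follows essentially the same route as the paper: the paper establishes precisely the reflecting Cameron--Martin density identity you need (its Lemma~\ref{lem:Lemma3}, via the Girsanov argument of Lemma~\ref{lem:Lemma2}), proves the identification $u^{\pm}=q_{b_{\kappa}^{\pm}}$ together with the required H\"older regularity of $\nabla u^{\pm}$ by an $\varepsilon$-regularisation of the nonlinearity (Lemma~\ref{lem:Lemma5}), and then concludes by exactly your pointwise sign argument on $c\cdot\nabla u^{\pm}$. The technical issues you flag---smoothness of the base drift in the representation formula and the behaviour on the nodal set---are handled in the paper only at the level of the regularised problem and a limiting argument, so your caveats are well placed.
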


Obviously, for given $T$ and $y$, the bounds in (\ref{eq:com1})
for $q_{b}(t,x;T,y)$ is optimal, and (\ref{eq:semilinear-PDE-1})
can be considered as the Hamilton-Jacobi-Bellman (HJB) equation for
the optimization problem for $q_{b}(t,x;T,y)$.

The semi-linear parabolic equations such as (\ref{eq:semilinear-PDE-1})
have been studied in PDE literature (see e.g. \cite{LadyzhenskajaParabolic1968}).
In order to carry out explicit computations, one needs to consider
the nodal set of the space-derivative $\nabla u(t,x)$, which also
solves a non-linear parabolic equation. The study of nodal sets of
solutions to semi-linear parabolic equations is however a difficult
subject, and is far from complete. Interesting results may be found
in the papers \cite{lin-nodal1991,lin-nodal1994} and etc.

In the case that $G=\mathbb{R}^{n}$, given $T>0$ and $y\in\mathbb{R}^{n}$
then $b^{\pm}(t,x)=\mp\kappa(x-y)/|x-y|$, the radial direction vector
fields, which have been determined in \cite{QRZ03,QZ04}. Here we
propose a new method for determining the HJB equations for this optimization
problem based on a representation for the perturbations of reflecting
diffusion processes, which extends the approach in \cite{QRZ03} to
reflecting diffusion processes.

There is of course huge literature both on diffusion processes and
related stochastic optimal control problems, for the general aspects
of their study, the reader should refer to the standard references
such as \cite{Davies1989Heatkernel,Ikeda-Watanabe,Ito-McKean1974,Krylov09,KS91,Mansuy-Yoy2008,RY99,SV79}. 

The paper is organized as following. In the section \S\ref{sec:Optimal-bounds},
we establish a representation formula for the transition probability
density of the reflecting diffusion process. Then, we present the
proof of Theorem \ref{thm:Theorem1} by the study of the representation
and the HJB equation. In the section \S\ref{sec:Reflecting-bang-bang-diffusion},
we consider the one dimensional case with $G=[0,\infty)$, and we
give the explicit formula of the optimal transition probability densities
for the case $y=0$. We also study the connection with the reflecting
bang-bang diffusion process. In order to gain further knowledge about
the optimal transition probabilities $q_{b_{\kappa}^{\pm}}(t,x;T,y)$
for the general case, for example, $y>0$ and $G=[0,\infty)$, we
demonstrate, in the section \S\ref{sec:HJB}, by numerical simulations
that the optimal diffusion processes exhibit an interesting feature
of phase transitions. Hence, the HJB equation may be equivalent to
a free boundary problem. We study a solvable stochastic control problem
for a class of diffusion type processes with reflection in the section
\S\ref{sec:Stochastic-Control}. We find out the optimal process
and calculate its transition probability, which is connected with
the optimal process in the section \S\ref{sec:Reflecting-bang-bang-diffusion}.
The explicit formula of the value functions are also given there.

\section{Optimal bounds for reflecting diffusion processes\label{sec:Optimal-bounds}}

This section is devoted to the proof of Theorem \ref{thm:Theorem1}. 

The main ingredient in the proof of Theorem \ref{thm:Theorem1} is
a density version of the Cameron-Martin formula for reflecting diffusion
processes. Let $G\subseteq\mathbb{R}^{n}$ be an open subset with
a smooth boundary $\partial G$, and $\nu$ denote the outer unit
normal vector fields along $\partial G$. Suppose $b(t,x)$ and $c(t,x)$
are two bounded (time-dependent) vector fields for $t\geq0$ and $x\in\bar{G}$.
Let $\left(X_{t},\mathbb{P}_{s,x}\right)$ be the reflecting diffusion
process with infinitesimal generator 
\[
\mathscr{L}_{t,x}=\frac{1}{2}\Delta+b(t,x)\cdot\nabla
\]
with its state space $\bar{G}$, that is, $\mathbb{P}_{s,x}$ (for
every $s\geq0$ and $x\in\bar{G}$) is the solution to the martingale
problem (see e.g. \cite{SW1971-boundary}):
\[
M_{t}^{[f]}=f(t,X_{t})-f(s,X_{s})-\int_{s}^{t}\mathscr{L}_{r,X_{r}}f(r,X_{r})dr
\]
is a local martingale (where $t\geq s$) for every $f\in C_{b}^{1,2}([0,\infty)\times\bar{G})$
such that $\left.\frac{\partial}{\partial\nu}f(t,\cdot)\right|_{\partial G}=0$
as for all $t>0$. Define a family of probability measures $\mathbb{Q}_{s,x}$
by 
\begin{equation}
\frac{d\mathbb{Q}_{s,x}}{d\mathbb{P}_{s,x}}\bigg|_{\mathcal{F}_{t}}=R_{s,t}:=\exp\left\{ \int_{s}^{t}c(r,X_{r})\cdot dW_{r}-\frac{1}{2}\int_{s}^{t}|c|^{2}(r,X_{r})dr\right\} ,\label{eq:careron-martin-Q}
\end{equation}
where $s\leq t$, and $W$ is the martingale part of $X$ which is
a Brownian motion in $\mathbb{R}^{n}$ under $\mathbb{P}_{s,x}$. 
\begin{lem}
\label{lem:Lemma2}Under above assumptions and notations. $(X_{t},\mathbb{Q}_{s,x})$
(for $s\geq0$ and $x\in\bar{G}$) is a reflecting diffusion process
with its infinitesimal generator 
\[
\tilde{\mathscr{L}}_{t,x}=\frac{1}{2}\Delta+(b(t,x)+c(t,x))\cdot\nabla.
\]
That is, for any pair $s\geq0$ and $x\in\bar{G}$,
\[
\tilde{M}_{t}^{[f]}=f(t,X_{t})-f(s,X_{s})-\int_{s}^{t}\tilde{\mathscr{L}}_{r,X_{r}}f(r,X_{r})dr
\]
is a local martingale for $t\geq s$ under the probability $\mathbb{Q}_{s,x}$,
for every $f\in C_{b}^{1,2}\left([0,\infty)\times\bar{G}\right)$
such that $\left.\frac{\partial}{\partial\nu}f(t,\cdot)\right|_{\partial G}=0$
for all $t>0$. 
\end{lem}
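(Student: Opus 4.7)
The plan is to treat this as a Girsanov-type change of measure, the only subtlety being that the underlying process lives in $\bar G$ and reflects at $\partial G$. Since $c$ is assumed bounded on $[0,\infty)\times\bar G$, Novikov's condition is satisfied trivially under $\mathbb{P}_{s,x}$, so $(R_{s,t})_{t\ge s}$ defined by \eqref{eq:careron-martin-Q} is a true $\mathbb{P}_{s,x}$-martingale (not merely local), and therefore $\mathbb{Q}_{s,x}$ is a well-defined probability measure on each $\mathcal{F}_t$, consistent in $t$ by the martingale property.

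The next step is to extract the martingale part $W$ of $X$ under $\mathbb{P}_{s,x}$ and apply Girsanov. Taking $f$ to be each coordinate function $x^i$ (extended smoothly to satisfy the Neumann condition, or working componentwise in a local chart near $\partial G$), the martingale problem definition of $\mathbb{P}_{s,x}$ yields a semimartingale decomposition
\[
X_t = X_s + \int_s^t b(r,X_r)\,dr + W_t + A_t,
\]
where $W$ is an $n$-dimensional Brownian motion under $\mathbb{P}_{s,x}$ and $A$ is the bounded-variation reflection term supported on $\{r : X_r\in\partial G\}$ and pointing along $-\nu$. By Girsanov's theorem applied to the exponential martingale $R_{s,\cdot}$, the process
\[
\tilde W_t \;:=\; W_t - \int_s^t c(r,X_r)\,dr, \qquad t\ge s,
\]
is an $n$-dimensional Brownian motion under $\mathbb{Q}_{s,x}$; the reflection term $A$ is of bounded variation, hence unchanged by the change of measure.

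To verify the stated martingale problem for $\mathbb{Q}_{s,x}$, fix $f\in C_b^{1,2}([0,\infty)\times\bar G)$ with $\partial f/\partial\nu|_{\partial G}=0$. Itô's formula applied under $\mathbb{P}_{s,x}$ gives
\[
f(t,X_t)-f(s,X_s) = \int_s^t \mathscr{L}_{r,X_r}f(r,X_r)\,dr + \int_s^t \nabla f(r,X_r)\cdot dW_r + \int_s^t \nabla f(r,X_r)\cdot dA_r,
\]
and the last integral vanishes because $dA_r$ is supported on $\{X_r\in\partial G\}$ and carried by $-\nu(X_r)$, where $\nabla f\cdot\nu = 0$ by hypothesis. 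Substituting $dW_r = d\tilde W_r + c(r,X_r)\,dr$ converts the stochastic integral term into $\int_s^t \nabla f\cdot d\tilde W_r + \int_s^t c\cdot\nabla f\,dr$, so that
\[
\tilde M_t^{[f]} = f(t,X_t)-f(s,X_s)-\int_s^t \tilde{\mathscr{L}}_{r,X_r}f(r,X_r)\,dr = \int_s^t \nabla f(r,X_r)\cdot d\tilde W_r,
\]
which is a $\mathbb{Q}_{s,x}$-local martingale. This is exactly the defining property.

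I expect the main obstacle to be bookkeeping at the boundary rather than any genuine analytic difficulty: one must justify that $X$ admits the above semimartingale decomposition with a Brownian martingale part $W$ and a reflection term $A$ carried by $-\nu$ on $\partial G$, starting solely from the Stroock--Varadhan submartingale/martingale problem definition recalled in the excerpt. Once that decomposition is in hand, the Neumann condition on $f$ kills the boundary contribution cleanly, and the change of measure proceeds exactly as in the free case \eqref{eq:rep1}. In particular, the reflection structure is preserved precisely because the Cameron--Martin density $R_{s,t}$ only perturbs the drift of the martingale part and does not interact with the local-time term $L$.
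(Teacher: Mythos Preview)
Your argument is correct and rests on the same Girsanov idea as the paper, but the bookkeeping is organized differently. You first extract the full semimartingale decomposition $X = X_s + \int b\,dr + W + A$, apply Girsanov to $W$ to obtain the $\mathbb{Q}_{s,x}$-Brownian motion $\tilde W$, and then invoke It\^o's formula on $f$, using the Neumann condition to annihilate the boundary integral $\int \nabla f\cdot dA$. The paper instead applies Girsanov directly to the test-function local martingale $M^{[f]}$: since $M^{[f]}$ is already a $\mathbb{P}_{s,x}$-local martingale by hypothesis, Girsanov yields that $M^{[f]}-\langle N,M^{[f]}\rangle$ is a $\mathbb{Q}_{s,x}$-local martingale, and computing $\langle N,M^{[f]}\rangle_t=\int_s^t c\cdot\nabla f\,dr$ finishes the proof. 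The advantage of the paper's route is that the reflection term never appears explicitly---it is already absorbed into the definition of $M^{[f]}$---so the ``bookkeeping at the boundary'' you flagged as the main obstacle is bypassed entirely. Your route, by contrast, makes the reflection structure (and its invariance under the change of measure) completely transparent, at the cost of having to justify the Stroock--Varadhan decomposition with $A$ carried by $-\nu$.
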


\begin{proof}
Without losing generality, we may assume that $s=0$ and $x\in\bar{G}$
is fixed. Under $\mathbb{P}_{0,x}$, $M^{\left[f\right]}$ is a local
martingale for any $f\in C$$^{1,2}$ such that $\left.\frac{\partial}{\partial\nu}f(t,\cdot)\right|_{\partial G}=0$
for all $t>0$. Hence, according to the Girsanov theorem, 
\[
M_{t}^{\left[f\right]}-\left\langle N,M^{[f]}\right\rangle _{t}
\]
is a local martingale under the probability $\mathbb{Q}_{0,x}$, where
$N_{t}=\int_{0}^{t}c(r,X_{r})\cdot dW_{r}$. Since the martingale
part $W$ of $X$ is a Brownian motion, so that
\[
\left\langle N,M^{[f]}\right\rangle _{t}=\int_{0}^{t}\left\langle c,\nabla f\right\rangle (r,X_{r})dr
\]
and therefore 
\[
\tilde{M}_{t}^{[f]}=M_{t}^{\left[f\right]}-\int_{0}^{t}\left\langle c,\nabla f\right\rangle (r,X_{r})dr=M_{t}^{\left[f\right]}-\left\langle N,M^{[f]}\right\rangle _{t}
\]
is a local martingale under $\mathbb{Q}_{s,x}$, which completes the
proof. 
\end{proof}
By using Lemma \ref{lem:Lemma2}, for $s<t$ and $x,y\in\bar{G}$
and the fact that both $q_{b}(s,x;t,y)$ and $q_{b+c}(s,x;t,y)$ are
H\"{o}lder continuous, conditional on $X_{t}=y$, we may obtain that
\begin{equation}
\frac{q_{b+c}(s,x;t,y)}{q_{b}(s,x;t,y)}=\mathbb{P}_{s,t}^{x,y}\left[\exp\left\{ \int_{s}^{t}c(r,X_{r})\cdot dW_{r}-\frac{1}{2}\int_{s}^{t}|c|^{2}(r,X_{r})dr\right\} \right],\label{eq:compare-1}
\end{equation}
where $\mathbb{P}_{s,t}^{x,y}$ is the conditional probability $\mathbb{P}_{s,x}\left[\cdot|X_{t}=y\right]$,
which is a probability measure on $(\varOmega,\mathcal{F}_{t})$ given
via the density process
\begin{equation}
\frac{d\mathbb{P}_{s,t}^{x,y}}{d\mathbb{P}_{s,x}}\bigg|_{\mathcal{F}_{r}}=\frac{q_{b+c}(r,X_{r};t,y)}{q_{b}(s,x;t,y)}\quad\forall\ s<r<t.\label{eq:conditional-prob-RN}
\end{equation}
\begin{lem}
\label{lem:Lemma3}Let $b(t,x)$ and $c(t,x)$ be two bounded vector
fields in $\bar{G}$, and assume that $b$ is smooth. Let $(X_{t},\mathbb{P}_{s,x})$
be the reflecting diffusion process with generator $\mathscr{L}_{t,x}$
as in Lemma \ref{lem:Lemma2}. Then
\begin{equation}
q_{b+c}(s,x;T,y)=q_{b}(s,x;T,y)+\int_{s}^{T}\mathbb{P}_{s,x}\left[R_{s,r}c(r,X_{r})\cdot\nabla_{x}q_{b}(r,X_{r};T,y)\right]dr\label{eq:rep-reb1}
\end{equation}
for any $0\leq s<T$, and any $x,y\in\bar{G}$, where $R$ is given
in (\ref{eq:careron-martin-Q}). 
\end{lem}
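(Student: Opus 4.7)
The plan is to verify (\ref{eq:rep-reb1}) in weak form by pairing against an arbitrary test function $\phi\in C_{c}^{\infty}(\bar{G})$ and stripping $\phi$ off at the end; this sidesteps having to differentiate the singular kernel $q_{b}(r,\cdot;T,y)$ directly. Define
$$u(r,x):=\int_{\bar{G}}q_{b}(r,x;T,y)\phi(y)\,dy=\mathbb{P}_{r,x}\bigl[\phi(X_{T})\bigr],\qquad (r,x)\in[0,T]\times\bar{G}.$$
Because $b$ is smooth and $\partial G$ is smooth, standard parabolic Schauder theory for the oblique (here Neumann) boundary problem ensures $u\in C^{1,2}([0,T)\times\bar{G})$ with derivatives bounded on each strip $[s,T-\varepsilon]\times\bar{G}$, and $u$ solves the backward equation $\partial_{r}u+\mathscr{L}u=0$ on $G$ with $\partial_{\nu}u|_{\partial G}=0$ and terminal value $u(T,\cdot)=\phi$.

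Next, applying It\^o's formula to $u(r,X_{r})$ under $\mathbb{P}_{s,x}$ and using the Skorohod decomposition of the reflecting process, the pathwise drift becomes $(\partial_{r}u+\mathscr{L}u)(r,X_{r})\,dr$ and vanishes by the PDE, while the reflection contributes a boundary local-time term $\nabla u(r,X_{r})\cdot\nu(X_{r})\,dL_{r}$ which vanishes because $L$ grows only when $X_{r}\in\partial G$ and there $\nabla u\cdot\nu=0$ by the Neumann condition. Hence $u(r,X_{r})$ is a $\mathbb{P}_{s,x}$-local martingale whose martingale part is $\int\nabla u(r,X_{r})\cdot dW_{r}$. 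Combined with $dR_{s,r}=R_{s,r}c(r,X_{r})\cdot dW_{r}$, integration by parts yields
$$d\bigl(R_{s,r}u(r,X_{r})\bigr)=R_{s,r}\,c(r,X_{r})\cdot\nabla u(r,X_{r})\,dr+d(\text{local martingale}).$$

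Integrating on $[s,T-\varepsilon]$ and taking expectation under $\mathbb{P}_{s,x}$ (boundedness of $b,c,\nabla u$ on this strip makes the martingale part genuinely integrable), then letting $\varepsilon\downarrow 0$ via dominated convergence, one obtains
$$\mathbb{P}_{s,x}\bigl[R_{s,T}\phi(X_{T})\bigr]=u(s,x)+\int_{s}^{T}\mathbb{P}_{s,x}\bigl[R_{s,r}\,c(r,X_{r})\cdot\nabla u(r,X_{r})\bigr]\,dr.$$
By Lemma \ref{lem:Lemma2} the left side equals $\int q_{b+c}(s,x;T,y)\phi(y)\,dy$. Substituting the definition of $u$ back in, Fubini pulls $\phi$ outside both the $r$-integral and the expectation, so the identity becomes $\int\phi(y)$ times the right-hand side of (\ref{eq:rep-reb1}) integrated in $y$; arbitrariness of $\phi$ together with joint continuity of the densities then gives the pointwise statement. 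The main obstacle is the singularity of $q_{b}(r,\cdot;T,y)$ as $r\uparrow T$: the test-function device replaces this singular kernel by the smooth $u$, and the time-integral is localised to $[s,T-\varepsilon]$ to apply classical martingale calculus. A secondary but essential point is that the Neumann condition on $u$ cleanly kills the boundary local-time term arising in It\^o's formula, which is what makes the argument for reflecting diffusions parallel to the one for ordinary diffusions in \cite{QRZ03}.
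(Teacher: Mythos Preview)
Your argument is correct and runs parallel to the paper's, but the paper takes a more direct route that avoids the test function. Instead of smoothing $q_{b}(\cdot,\cdot;T,y)$ against $\phi$, the paper works pointwise in $y$ from the start by introducing the density martingale
\[
M_{t}=\frac{q_{b}(t,X_{t};T,y)}{q_{b}(s,x;T,y)},
\]
which is the Radon--Nikodym derivative of the bridge measure $\mathbb{P}_{s,T}^{x,y}$ against $\mathbb{P}_{s,x}$ on $\mathcal{F}_{t}$, and identifies it as the exponential martingale of $\int\nabla\ln q_{b}\cdot dW$. The product-rule step is then the same as yours: $M_{t}R_{t}-\langle M,R\rangle_{t}$ is a martingale, and the bracket produces exactly $c\cdot\nabla\ln q_{b}$. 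The ratio $q_{b+c}/q_{b}=\mathbb{P}_{s,T}^{x,y}[R_{T}]$ from (\ref{eq:compare-1}) closes the computation directly at the fixed endpoint $y$, with the $\varepsilon\downarrow0$ limit justified by Gaussian bounds on both kernels. What your approach buys is that you never have to manipulate the singular object $\nabla_{x}q_{b}(r,\cdot;T,y)$ inside stochastic calculus, only the smooth $u$; the price is the final Fubini and the density/continuity argument to strip $\phi$. What the paper's approach buys is that the pointwise statement in $y$ falls out immediately, at the cost of invoking the bridge measure and the regularity of $\nabla\ln q_{b}$ directly. Both proofs rest on the same mechanism: the Neumann condition (implicitly, in the paper's martingale-problem formulation; explicitly, in your It\^o calculation) is what kills the boundary term and makes the reflecting case go through exactly like the whole-space case.
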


\begin{proof}
Let $s<T$ and $x,y\in\bar{G}$ be fixed. Then we have two positive
martingales, one is the Cameron-Martin density $R_{t}=R_{s,t}$ given
by (\ref{eq:careron-martin-Q}), which is the exponential martingale
of $N_{t}=\int_{s}^{t}c(r,X_{r})\cdot dW_{r}$, so that
\begin{equation}
R_{t}=1+\int_{s}^{t}R_{r}c(r,X_{r})\cdot dW_{r}\label{eq:des5}
\end{equation}
for $s\leq t\leq T$, which defines the probability $\mathbb{Q}_{s,x}$.
The another is the conditional probability density 
\[
M_{t}=\frac{q_{b}(t,X_{t};T,y)}{q_{b}(s,x;T,y)},\quad\forall\ s<t<T
\]
which determines the conditional probability $\mathbb{P}_{s,T}^{x,y}$,
which can be written as 
\[
M_{t}=\frac{q_{b}(t,X_{t};T,y)}{q_{b}(s,x;T,y)}=e^{\ln q_{b}(t,X_{t};T,y)-\ln q_{b}(s,x;T,y)}.
\]
Since $b$ is smooth, the martingale part of $\ln q_{b}(t,X_{t};T,y)-\ln q_{b}(s,x;T,y)$
equals 
\[
Z_{t}:=\int_{s}^{t}\nabla\ln q_{b}(r,X_{r};T,y)\cdot dW_{r}
\]
so that $M$ must coincide with the exponential martingale of $Z$,
hence
\begin{equation}
M_{t}=1+\int_{s}^{t}M_{r}\nabla\ln q_{b}(r,X_{r};T,y)\cdot dW_{r}\label{eq:des6}
\end{equation}
for $s<t<T$. By (\ref{eq:des5}, \ref{eq:des6}) we have
\[
\left\langle M,R\right\rangle _{t}=\int_{s}^{t}M_{r}R_{r}c(r,X_{r})\cdot\nabla\ln q_{b}(r,X_{r};T,y)dr
\]
and therefore 
\[
M_{t}R_{t}-\left\langle M,R\right\rangle _{t}
\]
is a martingale up to $T$, with $M_{s}R_{s}=1$. Since both $q_{b+c}(s,x;T,y)$
and $q_{b}(s,x;T,y)$ possess the Gaussian bounds (see e.g. \cite{Aronson68,Stroock-SemPaper1988}),
therefore
\begin{align*}
\frac{q_{b+c}(s,x;T,y)}{q_{b}(s,x;T,y)} & =\mathbb{P}_{s,T}^{x,y}\left[R_{T}\right]=\lim_{\varepsilon\downarrow0}\mathbb{P}_{s,T}^{x,y}\left[R_{T-\varepsilon}\right]\\
 & =\lim_{\varepsilon\downarrow0}\mathbb{P}_{s,x}\left[M_{T-\varepsilon}R_{T-\varepsilon}\right]\\
 & =1+\mathbb{P}_{s,x}\left[\int_{s}^{T}M_{r}R_{r}c(r,X_{r})\cdot\nabla\ln q_{b}(r,X_{r};T,y)dr\right]\\
 & =1+\frac{1}{q_{b}(s,x;T,y)}\mathbb{P}_{s,x}\left[\int_{s}^{T}R_{r}c(r,X_{r})\cdot\nabla q_{b}(r,X_{r};T,y)dr\right],
\end{align*}
which completes the proof of the lemma.
\end{proof}
\begin{lem}
\label{lem:Lemma5}Let $\beta$ be a constant and $y\in\bar{G}$.
Let $w(t,x)$ be the unique weak solution to the following non-linear
parabolic equation
\begin{equation}
\frac{\partial}{\partial t}w=\frac{1}{2}\Delta w+\beta|\nabla w|\;\textrm{ for }t>0\textrm{ and }x\in G\label{eq:basiceq1-1}
\end{equation}
subject to the initial and boundary conditions that
\begin{equation}
\left.\frac{\partial}{\partial\nu}w(t,\cdot)\right|_{\partial G}=0\quad\textrm{ for }t>0,\;\textrm{ and }w(0,x)=\delta_{y}(x).\label{eq:bdcod1}
\end{equation}
Then both $w(t,x)$ and its weak derivative $\nabla w(t,x)$ are H\"{o}lder
continuous for $t>0$ and $x\in\bar{G}$, and for any given $T>0$,
\begin{equation}
q_{V}(t,x;T,y)=w(T-t,x)\quad\textrm{ for }0\leq t<T\textrm{ and }x\in\bar{G},\label{eq:ex1}
\end{equation}
where
\[
V(t,x)=\beta\frac{\nabla w(T-t,x)}{\left|\nabla w(T-t,x)\right|}
\]
and $V(t,x)=0$ for $t\geq T$. 
\end{lem}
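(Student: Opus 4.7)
The plan is to identify $\tilde w(t,x):=w(T-t,x)$ with the transition density $q_V(t,x;T,y)$ by showing that both solve the same backward Kolmogorov initial–boundary value problem, and then invoking uniqueness. First I would recall from the theory of quasilinear parabolic equations with Neumann data (see \cite{LadyzhenskajaParabolic1968}) that \eqref{eq:basiceq1-1}–\eqref{eq:bdcod1} with singular initial datum $\delta_y$ admits a unique weak solution which becomes classical for $t>0$ by parabolic smoothing; since the nonlinearity $\beta|\nabla w|$ is Lipschitz in $\nabla w$, the standard bootstrap delivers the local H\"older regularity of both $w$ and $\nabla w$ asserted in the lemma.

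Next I would verify directly that $\tilde w(t,x)$ satisfies
\[
\partial_t\tilde w+\tfrac12\Delta\tilde w+V\cdot\nabla\tilde w=0\quad\text{on }(0,T)\times G,
\]
together with $\partial_\nu\tilde w|_{\partial G}=0$ and the terminal condition $\tilde w(T,\cdot)=\delta_y$. The essential algebraic identity is $V\cdot\nabla\tilde w=\beta|\nabla\tilde w|$, which follows from the definition of $V$ where $\nabla\tilde w\neq0$ and holds trivially where $\nabla\tilde w=0$. In particular, although $V$ itself may be discontinuous on the nodal set $\{\nabla\tilde w=0\}$, the product $V\cdot\nabla\tilde w$ is continuous, so the PDE is unambiguous.

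To connect this to the diffusion, let $(X_t,\mathbb{P}_{s,x})$ denote the reflecting diffusion in $\bar G$ with bounded measurable drift $V$, whose existence and uniqueness in law follow from the Stroock--Varadhan martingale problem framework. For $s<t<T$, Itô's formula applied to $\tilde w(r,X_r)$ on $[s,t]$ would give
\[
\tilde w(t,X_t)-\tilde w(s,x)=\int_s^t\nabla\tilde w(r,X_r)\cdot dW_r,
\]
the Lebesgue drift vanishing by the PDE and the local-time contribution vanishing by the Neumann condition. Taking expectations,
\[
\tilde w(s,x)=\int_{\bar G}q_V(s,x;t,z)\,\tilde w(t,z)\,dz,
\]
and letting $t\uparrow T$, using the initial condition $w(0,\cdot)=\delta_y$ together with the Aronson--Stroock Gaussian bounds on $q_V$ (\cite{Aronson68,Stroock-SemPaper1988}), would yield $\tilde w(s,x)=q_V(s,x;T,y)$, which is \eqref{eq:ex1}.

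The main technical obstacle is the interplay between the singular terminal condition $\delta_y$ and the discontinuity of $V$ on the nodal set. The discontinuity itself is harmless because $V\cdot\nabla\tilde w$ is continuous, so the PDE and the Itô calculation are well-defined; what genuinely needs care is the passage to the limit $t\uparrow T$ in the Chapman--Kolmogorov identity above. I would justify it by testing against smooth compactly supported bump functions approximating $\delta_y$, combining the Gaussian upper bound on $q_V(s,x;t,\cdot)$ with the fact that $w(t,\cdot)\to\delta_y$ weakly as $t\downarrow 0$; equivalently, one may use the well-posedness of the backward Neumann problem for parabolic equations with bounded measurable drift and conclude by uniqueness of the fundamental solution.
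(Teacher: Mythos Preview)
Your overall strategy---show that $\tilde w(t,x)=w(T-t,x)$ solves the backward Kolmogorov problem for the drift $V$ with Neumann boundary and terminal datum $\delta_y$, then identify it with $q_V$ by uniqueness---is exactly the paper's. The execution differs in two places. For the H\"older regularity of $\nabla w$, the paper does not invoke a black-box bootstrap: it regularises the nonlinearity to $\beta\sqrt{|\nabla w^\varepsilon|^2+\varepsilon^2}$, differentiates the resulting smooth equation to obtain a \emph{linear} parabolic equation for $\nabla w^\varepsilon$ with drift bounded uniformly by $|\beta|$, and then applies Nash's estimates to pass to the limit. This is more hands-on than your appeal to \cite{LadyzhenskajaParabolic1968}, and has the advantage of making transparent exactly which linear theory (uniformly bounded measurable coefficients) is driving the regularity; your bootstrap is correct but leaves that mechanism implicit. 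For the identification step, the paper stays on the PDE side: it simply notes that $q_V(\cdot,\cdot;T,y)$ is by definition the fundamental solution of the same backward linear equation with the same Neumann and terminal data, and concludes by uniqueness of weak solutions. Your It\^o--Feynman--Kac route gives the same conclusion and is perfectly sound once you have enough regularity to apply It\^o's formula (your bootstrap to $C^{2,\alpha}$ via Schauder once $\nabla w\in C^\alpha$ is the right justification), but the paper's purely analytic argument sidesteps that regularity requirement entirely by working at the level of weak solutions.
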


\begin{proof}
According to the theory of parabolic equations (see e.g. \cite{LadyzhenskajaParabolic1968}),
the problem (\ref{eq:basiceq1-1}, \ref{eq:bdcod1}) has a unique
weak solution $w(t,x)$ which is H\"{o}lder continuous for $t>0$
and $x\in\bar{G}$. We need a bit more regularity of the solution
$w(t,x)$. To this end, for $\varepsilon>0$ consider the semi-linear
parabolic equation
\begin{equation}
\frac{\partial}{\partial t}w^{\varepsilon}=\frac{1}{2}\Delta w^{\varepsilon}+\beta\sqrt{|\nabla w^{\varepsilon}|^{2}+\varepsilon^{2}}\;\textrm{ for }t>0\textrm{ and }x\in G\label{eq:basiceq1-1-1}
\end{equation}
subject to the same initial and boundary conditions (\ref{eq:bdcod1}).
Then, there is a unique strong solution $w^{\varepsilon}(t,x)$ for
every $\varepsilon>0$ which is smooth for $t>0$ and $x\in\bar{G}$.
Let $w_{x}^{\varepsilon}=\nabla w^{\varepsilon}$ denote the space
derivative. By taking derivatives in $x$ for the equation (\ref{eq:basiceq1-1-1}),
we find that $w_{x}^{\varepsilon}=\nabla w^{\varepsilon}$ solves
the Dirichlet boundary problem
\[
\frac{\partial}{\partial t}w_{x}^{\varepsilon}=\left[\frac{1}{2}\Delta+\beta\frac{\nabla w^{\varepsilon}}{\sqrt{\left(\nabla w^{\varepsilon}\right)^{2}+\varepsilon^{2}}}\cdot\nabla\right]w_{x}^{\varepsilon}\;\textrm{ for }t>0\textrm{ and }x\in G
\]
subject to the Dirichlet boundary condition along $\partial G$. Notice
that
\[
\left|\beta\frac{\nabla w^{\varepsilon}}{\sqrt{\left(\nabla w^{\varepsilon}\right)^{2}+\varepsilon^{2}}}\right|\leq|\beta|
\]
is uniformly bounded, so according to Nash's theory (see e.g. \cite{Nash1958},
or \cite{Fabes-Stroock1986,Stroock-SemPaper1988}), there is a convergent
sequence $\left\{ w_{x}^{\varepsilon_{n}}\right\} $ with $\varepsilon_{n}\downarrow0$,
which tends to the weak solution $W$ to the parabolic equation
\[
\frac{\partial}{\partial t}W=\left[\frac{1}{2}\Delta+\beta\frac{\nabla w}{|\nabla w|}\cdot\nabla\right]W
\]
subject to the Dirichlet boundary condition along the boundary $\partial G$
for $t>0$. $W$ is H\"{o}lder continuous in $t>0$ and $x\in G$.
$W$ is a modification of the weak derivative $\nabla w(t,x)$ for
$t>0$ and $x\in G$. We may thus conclude that $\nabla w(t,x)$ is
H\"{o}lder continuous in $(0,\infty)\times G$. 

Given $T>0$, and the unique weak solution $w(t,x)$ to (\ref{eq:basiceq1-1},
\ref{eq:bdcod1}), $u(t,x)=w(T-t,x)$ solves the backward parabolic
equation
\begin{equation}
\frac{\partial}{\partial t}u+\frac{1}{2}\Delta u+\beta\frac{\nabla w(T-t,\cdot)}{\left|\nabla w(T-t,\cdot)\right|}\cdot\nabla u=0\;\textrm{ for }t>0\textrm{ and }x\in G\label{eq:basiceq1-1-2}
\end{equation}
subject to the initial and boundary conditions that
\begin{equation}
\left.\frac{\partial}{\partial\nu}u(t,\cdot)\right|_{\partial G}=0\quad\textrm{ for }t<T,\;\textrm{ and }\lim_{t\uparrow T}u(t,x)=\delta_{y}(x).\label{eq:bdcod1-1}
\end{equation}
Since $q_{V}(s,x;t,y)$ is the fundamental solution of the linear
parabolic equation 
\[
\frac{\partial}{\partial t}u=\frac{1}{2}\Delta u+V(t,x)\cdot\nabla u
\]
subject to the Neumann boundary condition at boundary $\partial G$,
hence, $(t,x)\rightarrow\tilde{u}(t,x)\eqqcolon q_{V}(t,x;T,y)$ solves
the backward equation
\begin{equation}
\frac{\partial}{\partial t}\tilde{u}+\frac{1}{2}\Delta\tilde{u}+\beta\frac{\nabla w(T-t,\cdot)}{\left|\nabla w(T-t,\cdot)\right|}\cdot\nabla\tilde{u}=0\;\textrm{ for }t>0\textrm{ and }x\geq0\label{eq:basiceq1-1-2-1}
\end{equation}
subject to the same initial-boundary conditions (\ref{eq:basiceq1-1-2},
\ref{eq:bdcod1-1}). By the uniqueness, we must have $\tilde{u}(t,x)=u(t,x)$
for $t<T$ and $x\in\bar{G}$. Hence
\[
q_{V}(t,x;T,y)=w(T-t,x)\quad\textrm{ for }t<T\textrm{ and }x\in\bar{G}.
\]
\end{proof}

\subsection*{Proof of Theorem \ref{thm:Theorem1}}

Now we have the major ingredients to prove Theorem \ref{thm:Theorem1}.
Let us explain the ideas leading to the conclusions in Theorem \ref{thm:Theorem1}.
According to the representation formula (\ref{eq:rep-reb1}), it is
apparent that the optimal probability $q_{b}(s,x;T,y)$ is achieved
when 
\[
c(r,x)\cdot\nabla_{x}q_{b}(r,x;T,y)\cdot
\]
has a definite sign for any $c(t,x)$ such that both $|b+c|$ and
$|b|$ are bounded by $\kappa$. Thus for fixed $T>0$ and $y$, we
want to find a vector field $b(t,x)$, which may depend on $T$ and
$y$, such that $|b|\leq\kappa$, and $c(t,x)\cdot\nabla q_{b}(t,x;T,y)$
is non-negative (resp. negative) for all $t<T$ and $x\in\bar{G}$
for all $c(t,x)$ satisfying that $|c+b|\leq\kappa$. Clearly the
best we can do is to choose $b(t,x)$ such that
\[
c(t,x)=A(t,x)\pm\kappa\frac{\nabla q_{b}(t,x;T,y)}{\left|\nabla q_{b}(t,x;T,y)\right|}
\]
where $A(t,x)=c(t,x)+b(t,x)$ so that $|A(t,x)|\leq\kappa$. That
is, the optimal vector fields should satisfy the functional equation
\begin{equation}
b^{\pm}(t,x)=\pm\kappa\frac{\nabla q_{b^{\pm}}(t,x;T,y)}{\left|\nabla q_{b^{\pm}}(t,x;T,y)\right|}\textrm{ for }t\geq0\textrm{ and }x\in\bar{G}.\label{eq:cons1}
\end{equation}
The question becomes to show the existence of such vector fields $b^{\pm}(t,x)$.
Suppose such vector fields exist, then $(t,x)\rightarrow u(t,x):=q_{b^{\pm}}(t,x;T,y)$
is the unique (weak) solution of the Neumann boundary problem to the
backward equation
\begin{equation}
\frac{\partial}{\partial t}u(t,x)+\frac{1}{2}\Delta u(t,x)+b^{\pm}(t,z)\cdot\nabla u(t,x)=0\;\textrm{ for }0<t<T\textrm{ and }x\geq0\label{eq:const2}
\end{equation}
subject to the terminal condition that $\lim_{t\uparrow T}u(t,x)=\delta_{y}(x)$
and the boundary condition that $\left.\frac{\partial}{\partial\nu}u(t,\cdot)\right|_{\partial G}=0$.
Together with (\ref{eq:cons1}), $u(t,x)$ solves the initial and
boundary problem to the semi-linear parabolic equation
\begin{equation}
\frac{\partial}{\partial t}u+\frac{1}{2}\Delta u\pm\kappa|\nabla u|=0\;\textrm{ for }0<t<T\textrm{ and }x\in G\label{eq:basiceq1}
\end{equation}
subject to the initial and boundary conditions above. By the general
theory of parabolic equations, the previous problem (\ref{eq:basiceq1})
has a unique weak solution, see e.g. \cite{LadyzhenskajaParabolic1968}.
The proof is complete. 

\section{Reflecting bang-bang diffusion processes\label{sec:Reflecting-bang-bang-diffusion} }

A closed formula for the solution to the HJB equation (\ref{eq:basiceq1-1},
\ref{eq:bdcod1}) in high dimensions in general is not known. Therefore
let us consider the one dimensional case and $G=[0,\infty)$. For
this case we may work out the explicit formula for the case that $y=0$.
Similar calculations may be carried out for other special domains,
which however must be treated case by case. 

\subsection{Connection with a bang-bang process}

Let $b(t,x),$ defined on $[0,\infty)\times\mathbb{R}^{+}$, be a
bounded, Borel measurable vector field. It is well known that there
is a unique solution to the $\mathscr{L}_{t,x}$-martingale problem
subject to the Neumann boundary condition at $0$, where 
\begin{equation}
\mathscr{L}_{t,x}=\frac{1}{2}\Delta+b(t,x)\cdot\nabla\label{eq:3.1}
\end{equation}
operating on $C^{2}$-functions $f$ on $[0,\infty)$ subject to the
condition that $\frac{\partial f}{\partial x}\rightarrow0$ as $x\downarrow0$. 

The simplest construction of one dimensional reflecting diffusion
processes, due to Skorohod \cite{Skorohod61-62}, is to determine
firstly the diffusion process in the whole line $\mathbb{R}$, that
is the weak solution to the It\^{o} stochastic differential equation
\begin{equation}
dY_{t}=b(t,\left|Y_{t}\right|)\textrm{sgn}(Y_{t})dt+dB_{t},\quad Y_{s}=x.\label{eq:extd1}
\end{equation}
Then for every $x\geq0$, $X_{t}=|Y_{t}|$ is the weak solution to
the following It\^{o}'s stochastic differential equation with boundary
\begin{equation}
dX_{t}=b(t,X_{t})dt+dB_{t}+dL_{t},\quad X_{s}=x,\label{eq:ebm-s1}
\end{equation}
where $t\rightarrow L_{t}$ is continuous and increasing, with initial
zero, and increases only on $\left\{ t\geq0:X_{t}=0\right\} $, so
that $(X_{t})$ is a reflecting diffusion started at $x\geq0$ with
its infinitesimal generator $\mathscr{L}_{t,x}$ together with the
Neumann boundary condition at $0$. Since $\tilde{b}(t,x)=b(t,|x|)\textrm{sgn}(x)$,
which is the odd function extension of $b(t,\cdot)$, is bounded,
according to Aronson \cite{Aronson68} and Nash \cite{Nash1958} (see
e.g. \cite{Fabes-Stroock1986,Norris97,Stroock-SemPaper1988} for simplified
proofs), there is a unique positive and continuous probability density
$p_{\tilde{b}}(s,x;t,y)$ for $t>s\geq0$ and $x,y\in\mathbb{R}$,
which is the heat kernel associated with the elliptic operator $\mathscr{L}_{t,x}=\frac{1}{2}\Delta+\tilde{b}(t,x)\cdot\nabla$,
in the sense that 
\[
\mathbb{E}\left[f(Y_{t})|Y_{s}=x\right]=\int_{\mathbb{R}}p_{\tilde{b}}(s,x;t,y)f(y)dy
\]
for positive or bounded Borel measurable function $f$. In fact $p_{\tilde{b}}(s,x;t,y)$
is the fundamental solution (in the weak solution sense) to the linear
parabolic equation
\[
\left(\frac{\partial}{\partial s}+\frac{1}{2}\Delta+\tilde{b}(s,\cdot)\nabla\right)u(s,x)=0
\]
for $s\geq0$ and $x\in\mathbb{R}$. $p_{\tilde{b}}(s,x;t,y)$ is
bounded from above and below by Gaussian functions (see e.g. \cite{Aronson68,Norris97}
for a precise statement), and is H\"{o}lder continuous in $s<t$
and $x,y\in\mathbb{R}$. As a consequence of Skorohod's construction,
the reflecting diffusion $(X_{t})$ possesses a continuous transition
probability density denoted by $q_{b}(s,x;t,y)$ (for $s<t$ and $x\geq0$,
$y\geq0$), that is, 
\[
\mathbb{E}\left[f(X_{t})|X_{s}=x\right]=\int_{[0,\infty)}q_{b}(s,x;t,y)f(y)dy,
\]
and
\begin{equation}
q_{b}(s,x;t,y)=p_{\tilde{b}}(s,x;t,y)+p_{\tilde{b}}(s,x;t,-y)\label{eq:ref-des1}
\end{equation}
for any $0\leq s<t$ and $x\geq0$, $y\geq0$. 

If $|b(t,x)|\leq\kappa$ for all $t\geq0$ and $x\geq0$, then $|\tilde{b}(t,x)|\leq\kappa$,
by applying Theorem 1 of \cite{QZ02} together with (\ref{eq:ref-des1})
we have the following corollary. 
\begin{cor}
\label{cor:bounds}If $|b(t,x)|\leq\kappa$ for $t>0$ and $x\geq0$,
then the transition probability density $q_{b}(s,x;t,y)$ of the reflecting
diffusion $(X_{t})$ possesses the following bounds 
\begin{equation}
\begin{aligned} & p_{y}^{-\kappa}(x,t-s,y)+p_{-y}^{-\kappa}(x,t-s,-y)\leq q_{b}(s,x;t,y)\\
 & \quad\quad\quad\leq p_{y}^{\kappa}(x,t-s,y)+p_{-y}^{\kappa}(x,t-s,-y),
\end{aligned}
\label{eq:QZ-bounds}
\end{equation}
for all $0\leq s<t$ and any $x,y\geq0$, where $p_{y}^{\beta}(x,t,z)$
is the transition probability density function of the diffusion process
\begin{equation}
dZ_{t}=-\beta\mathrm{sgn}(Z_{t}-y)dt+dB_{t}\label{eq:babg01}
\end{equation}
so that
\[
p_{y}^{\beta}(x,t,y)=\frac{1}{\sqrt{2\pi t}}\int_{|x-y|/\sqrt{t}}^{\infty}ze^{-(z-\beta\sqrt{t})^{2}/2}dz.
\]
In the case $y=0$, the bounds in (\ref{eq:QZ-bounds}) are optimal.
\end{cor}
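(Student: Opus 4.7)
The plan is to transfer the reflecting problem to the whole line through Skorohod's identity $X_t=|Y_t|$ and then invoke the known bang--bang comparison theorem on $\mathbb{R}$. The corollary is essentially an immediate consequence of what has already been assembled above the statement; the only real content is the optimality claim when $y=0$.

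First I would use the representation
\[
q_b(s,x;t,y)=p_{\tilde b}(s,x;t,y)+p_{\tilde b}(s,x;t,-y)
\]
from (\ref{eq:ref-des1}), where $\tilde b(t,z)=b(t,|z|)\,\mathrm{sgn}(z)$ is the odd extension. The constraint $|b|\le\kappa$ transfers verbatim to $|\tilde b|\le\kappa$ on all of $\mathbb{R}$, so the whole-line comparison theorem applies.

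Second, I would apply Theorem~1 of \cite{QZ02} separately to each summand. For a fixed target $z\in\mathbb{R}$, that theorem says that among all one-dimensional Borel drifts bounded by $\kappa$ the transition density $p_{\tilde b}(s,x;t,z)$ is maximized by the bang--bang drift $-\kappa\,\mathrm{sgn}(\cdot-z)$ and minimized by $\kappa\,\mathrm{sgn}(\cdot-z)$, and the extremal densities are exactly the $p_z^{\pm\kappa}(x,t-s,z)$ appearing in the statement. Applying this with $z=y$ and with $z=-y$ and adding the two inequalities produces (\ref{eq:QZ-bounds}).

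Third, for the $y=0$ optimality claim, I would observe that the two targets $y$ and $-y$ coincide, so both summands are extremized by the \emph{same} drift $\tilde b(z)=\mp\kappa\,\mathrm{sgn}(z)$. Because this extremal $\tilde b$ is already odd, it is itself the Skorohod extension of the admissible reflecting drift $b\equiv\mp\kappa$ on $[0,\infty)$, so both inequalities in (\ref{eq:QZ-bounds}) are attained simultaneously by a single admissible $b$. The main subtlety — and the reason the corollary only claims optimality for $y=0$ — is that for general $y>0$ the two summands are extremized by \emph{different} bang--bang drifts (targeting $y$ and $-y$ respectively), which cannot both be realised as the odd extension of one $b$ on $[0,\infty)$; that obstruction vanishes exactly when $y=0$.
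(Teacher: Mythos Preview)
Your derivation of the bounds \eqref{eq:QZ-bounds} is exactly what the paper does: it states just before the corollary that the inequalities follow by applying Theorem~1 of \cite{QZ02} to each summand in the Skorohod identity \eqref{eq:ref-des1}, using $|\tilde b|\le\kappa$. So on that part there is nothing to add.

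For the optimality claim at $y=0$, your argument is correct but takes a different route from the paper. The paper argues \emph{via Theorem~\ref{thm:Theorem1}}: it writes down the constant-drift reflecting diffusion $dX_t=dB_t+\beta\,dt+dL_t$, identifies its Skorohod lift as the bang--bang process $dY_t=dB_t+\beta\,\mathrm{sgn}(Y_t)\,dt$, computes $q_\beta(t,x;T,y)$ and its $x$-derivative explicitly from the known density \eqref{eq:density2}, and then checks that for $y=0$ one has $\nabla_x q_\beta(t,x;T,0)\le 0$ for all $x\ge 0$. This verifies the self-consistency condition $b^\pm=\pm\kappa\,\nabla q_{b^\pm}/|\nabla q_{b^\pm}|$ of Theorem~\ref{thm:Theorem1} for the constants $b^\pm=\mp\kappa$, so those constants are the optimal drifts. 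Your argument is more elementary: you simply observe that when $y=0$ the two whole-line extremizers (for targets $y$ and $-y$) coincide in the single odd drift $\mp\kappa\,\mathrm{sgn}(\cdot)$, which is already the Skorohod extension of the admissible constant $b\equiv\mp\kappa$; hence both summands attain their extrema simultaneously and the bound is achieved. This bypasses both Theorem~\ref{thm:Theorem1} and the explicit density formula. What the paper's approach buys, on the other hand, is that the gradient computation \eqref{eq:de-sign} also shows \emph{why} the argument breaks for $y>0$ (the sign of $\nabla_x q_\beta$ changes), which motivates the nodal-set discussion in Section~\ref{sec:HJB}; your obstruction remark (different targets $y$ and $-y$ force incompatible extremizers) gives the same intuition from the comparison side.
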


\begin{proof}
Let us show that the bounds in (\ref{eq:QZ-bounds}) are optimal if
$y=0$. To this end we consider the reflecting diffusion $(X_{t})$
in $[0,\infty)$ with a linear drift, i.e. the weak solution to
\begin{equation}
dX_{t}=dB_{t}+\beta dt+dL_{t}\label{eq:reflected-bangbang}
\end{equation}
where $L_{t}$ increases only when $X$ hits zero, whose transition
probability $q_{\beta}(s,x;t,z)$ is time homogeneous. The corresponding
diffusion process $Y$ in the Skorohod construction, so that $X=|Y|$,
is the weak solution to the stochastic differential equation
\begin{equation}
dY_{t}=dB_{t}+\beta\mathrm{sgn}(Y_{t})dt
\end{equation}
which is the special case of the bang-bang process whose transition
probability is $p^{\beta}(x,t,z)$ and therefore 
\begin{equation}
q_{\beta}(s,x;t,z)=p^{\beta}(x,t-s,z)+p^{\beta}(x,t-s,-z)\label{eq:density1}
\end{equation}
for $x\geq0$ and $z\geq0$. The transition probability density $p^{\beta}(x,t,z)$
can be worked out by using Cameron-Martin formula as in \cite{KS84,KS91,QZ02},
which is given by
\begin{equation}
\begin{aligned}p^{\beta}(x,t,z) & =\frac{1}{\sqrt{2\pi t}}e^{-\frac{1}{2t}[(x-z)^{2}-2\beta t(|z|-|x|)+\beta^{2}t^{2}]}\\
 & \quad-\beta e^{2\beta|z|}\int_{|x|+|z|+\beta t}^{+\infty}\frac{1}{\sqrt{2\pi t}}e^{-\frac{u^{2}}{2t}}du,
\end{aligned}
\label{eq:density2}
\end{equation}
for any $x,z\in\mathbb{R}$. In particular
\begin{equation}
\nabla_{x}q_{\beta}(t,x,T,y)=-\frac{1}{\sqrt{2\pi(T-t)^{3}}}e^{-\frac{(x-y+\beta(T-t))^{2}}{2(T-t)}}\left[x-y+\beta(T-t)+e^{-\frac{2xy}{T-t}}(x+y-\beta(T-t))\right]\label{eq:de-sign}
\end{equation}
for $x,y\geq0$ and $t<T$. In general, if $y>0$, then $\nabla_{x}q_{\beta}(t,x;T,y)$
has a zero $x>0$ and thus changes its sign. While, if $y=0$, then
\[
q_{\beta}(t,x;T,0)=\frac{2}{\sqrt{2\pi t}}\int_{x/\sqrt{t}}^{\infty}ze^{-(z+\beta\sqrt{t})^{2}/2}dz
\]
for $x\geq0$, so that $\nabla q_{\beta}(t,x;T,0)\leq0$, and thus
\[
-\beta\textrm{sgn}\left(q_{\beta}(t,x;T,0)\right)=\beta
\]
for $x\geq0$. Hence, according to Theorem \ref{thm:Theorem1}, for
any $T>0$ and $y=0$, the corresponding vector fields which optimize
$q_{b}(t,x;T,y)$ (where $|b|\leq\kappa$) are constants $b^{\pm}(t,x)=\mp\kappa$.
Therefore the bounds in (\ref{eq:QZ-bounds}) are optimal when $y=0$. 
\end{proof}

\subsection{A reflecting bang-bang process}

When $G=(-\infty,\infty)$, then there is no reflection, the optimal
bounds are attained by the bang-bang processes (\ref{eq:babg01}).
One then would wonder, given  $T>0$ and $y>0$, whether the optimal
probability $q_{b}(t,x;T,y)$ also should be attained by the reflecting
diffusion processes of bang-bang processes, that is, the diffusion
processes obtained by solving stochastic differential equation in
$[0,\infty)$ with boundary $0$:
\begin{equation}
dX_{t}=-\beta\textrm{sgn}(X_{t}-y)dt+dB_{t}+L_{t}.\label{eq:bang-bang-reflecting}
\end{equation}
In the case that $y>0$, the sign of $X_{t}-y$ cannot be determined
even though $X_{t}\geq0$. In order to calculate its transition density
function, which is time homogeneous, denoted by $q(t,x,y)$ for simplicity,
and to determine the sign of $\frac{\partial}{\partial x}q(t,x,y)$,
one needs to compute the probability density $p(t,x,y)$ to the associated
bang-bang process
\begin{equation}
dY_{t}=-\beta\textrm{sgn}(Y_{t})\textrm{sgn}(|Y_{t}|-y)dt+dB_{t},\label{eq:bang-bang-bang}
\end{equation}
which in turn requires the joint distribution of Brownian motion and
local times of Brownian motion at three distinct points. 

It is interesting by its own for calculating the transition probability
density $p(t,x,y)$ for the bang-bang process with three singularities.
Let $(B_{t},\mathbb{P}_{x})$ be standard Brownian motion on $(\Omega,\mathcal{F})$.
Consider the one dimensional diffusion process $\left\{ \mathbb{Q}_{x}:\ x\in\mathbb{R}\right\} $
associated with the generator $\mathscr{L}=\frac{1}{2}\Delta+b(x)\cdot\nabla$,
where $b(x)=-\beta\textrm{sgn}(|x|-y)\textrm{sgn}(x)$ and $y>0$.
For this case, the Cameron-Martin density for $0\leq s<t$ is defined
by 
\[
R_{t}=\exp\left[\int_{0}^{t}-\beta\textrm{sgn}(|B_{r}|-y)\textrm{sgn}(B_{r})dB_{r}-\frac{1}{2}\beta^{2}t\right],
\]
and therefore $R_{t}$ is the Radon-Nikodym derivative of $\mathbb{Q}_{x}$
with respect to the Wiener measure $\mathbb{P}_{x}$ restricted over
$(\varOmega,\mathcal{F}_{t})$, where $\mathcal{F}_{t}=\sigma(\{B_{s}:\ s\leq t\})$.
Notice that, for $t>0$ and $x,z\in\mathbb{R}$, we have 
\[
\frac{p(t,x,z)}{h(t,x,z)}=\mathbb{P}_{t}^{x,z}\left\{ \exp\left[\int_{0}^{t}-\beta\textrm{sgn}(|B_{r}|-y)\textrm{sgn}(B_{r})dB_{r}-\frac{1}{2}\beta^{2}t\right]\right\} ,
\]
where $h(t,x,z)=\frac{1}{\sqrt{2\pi t}}e^{-\frac{(x-z)^{2}}{2t}}$
is the heat kernel, and $\mathbb{P}_{t}^{x,z}$ is the Brownian motion
bridge measure. For $\varepsilon>0$ small we have 
\[
\left.\frac{d\mathbb{P}_{t}^{x,z}}{d\mathbb{P}_{x}}\right|_{\mathcal{F}_{t-\varepsilon}}=\frac{h(\varepsilon,B_{t-\varepsilon},z)}{h(t,x,z)},
\]
so that 
\begin{equation}
p(t,x,z)=\lim_{\varepsilon\downarrow0}\mathbb{P}_{x}\left\{ h(\varepsilon,B_{t-\varepsilon},z)\exp\left[\int_{0}^{t-\varepsilon}-\beta\textrm{sgn}(|B_{r}|-y)\textrm{sgn}(B_{r})dB_{r}-\frac{1}{2}\beta^{2}t\right]\right\} .
\end{equation}
Let 
\begin{equation}
\phi_{y}(x)=||x|-y|.\label{eq:phi-yx}
\end{equation}
Then by It\^{o}-Tanaka formula, 
\begin{equation}
\phi_{y}(B_{t})=\phi_{y}(x)+\int_{0}^{t}\textrm{sgn}(|B_{r}|-y)\textrm{sgn}(B_{r})dB_{r}+L_{t}^{y}-L_{t}^{0}+L_{t}^{-y},
\end{equation}
where $L_{t}^{a}$ is the local time of $B_{t}$ at $a$, and
\begin{equation}
p(t,x,z)=\lim_{\varepsilon\downarrow0}\mathbb{P}_{x}\left\{ h(\varepsilon,B_{t-\varepsilon},z)\exp\left[-\beta\left(\phi_{y}(B_{t-\varepsilon})-\phi_{y}(x)+L_{t-\varepsilon}^{0}-L_{t-\varepsilon}^{y}-L_{t-\varepsilon}^{-y}\right)-\frac{1}{2}\beta^{2}t\right]\right\} .\label{eq:ptxz}
\end{equation}
Let $f_{x,y,t}(u,w)$ be the density of the joint distribution of
$(L_{t}^{0}-L_{t}^{y}-L_{t}^{-y},B_{t})$, that is,

\[
\begin{aligned}\mathbb{P}_{x}(L_{t}^{0}-L_{t}^{y}-L_{t}^{-y}\in du,B_{t}\in dw) & =f_{x,y,t}(u,w)dudw.\end{aligned}
\]
Then 
\begin{equation}
\begin{aligned} & \quad p(t,x,z)\\
 & =\lim_{\varepsilon\downarrow0}\iint_{u,w\in\mathbb{R}}h(\varepsilon,w,z)e^{-\beta\left(\phi_{y}(w)-\phi_{y}(x)+u\right)-\frac{1}{2}\beta^{2}t}f_{x,y,t-\varepsilon}(u,w)dudw\\
 & =\int_{-\infty}^{\infty}e^{-\beta\left(\phi_{y}(z)-\phi_{y}(x)+u\right)-\frac{1}{2}\beta^{2}t}f_{x,y,t}(u,z)du\\
 & =e^{-\beta\left(\phi_{y}(z)-\phi_{y}(x)\right)-\frac{1}{2}\beta^{2}t}\int_{-\infty}^{\infty}e^{-\beta u}f_{x,y,t}(u,z)du.
\end{aligned}
\label{eq:q-yk-1}
\end{equation}
Therefore, the transition probability density function 
\begin{align}
q(t,x,y) & =p(t,x,y)+p(t,x,-y)\nonumber \\
 & =e^{\beta|x-y|-\frac{1}{2}\beta^{2}t}\int_{-\infty}^{\infty}e^{-\beta u}\left[f_{x,y,t}(u,y)+f_{x,y,t}(u,-y)\right]du.
\end{align}
The joint distribution of $(L_{t}^{0}-L_{t}^{y}-L_{t}^{-y},B_{t})$
or $(L_{t}^{-y},L_{t}^{0},L_{t}^{y},B_{t})$ is, however, not known.
Here, we give another strategy to compute the transition probability
density function $p(t,x,z)$. That is, we first compute the expectation
(\ref{eq:ptxz}) at a random time $\tau$, where $\tau$ is a random
variable independent of the Brownian motion $B_{t}$ and has the exponential
distribution $\mathbb{P}(\tau>t)=e^{-\lambda t}$ for $t\geq0$ and
$\lambda>0$. The motivation for computation at a random time $\tau$
is that one can get the solution by solving an ordinary differential
equation rather than a partial differential equation. Similar ideas
have been used, for example, in \cite{Boro-Salm02,Mansuy-Yoy2008}
for calculating various distributions of Brownian functionals. By
applying inverse Laplace transformation in time $t$, we may obtain
$p(t,x,z)$ at a fixed time $t$, since formally 
\begin{align}
p_{y,\lambda}^{\beta}(x,z) & :=\mathbb{P}_{x}\left\{ \mathds{1}_{B_{\tau}=z}\exp\left[-\beta\left(\phi_{y}(B_{\tau})-\phi_{y}(x)+L_{\tau}^{0}-L_{\tau}^{y}-L_{\tau}^{-y}\right)-\frac{1}{2}\beta^{2}\tau\right]\right\} \\
 & =\int_{0}^{\infty}\lambda e^{-\lambda t}\mathbb{P}_{x}\left\{ \mathds{1}_{B_{t}=z}\exp\left[-\beta\left(\phi_{y}(B_{t})-\phi_{y}(x)+L_{t}^{0}-L_{t}^{y}-L_{t}^{-y}\right)-\frac{1}{2}\beta^{2}t\right]\right\} dt\nonumber \\
 & =\int_{0}^{\infty}\lambda e^{-\lambda t}p(t,x,z)dt.
\end{align}
So we may define the Laplace transformation $U(x):=\lambda^{-1}p_{y,\lambda}^{\beta}(x,z)$
of $p(t,x,z)$, then 
\begin{equation}
\frac{1}{2}U''(x)+b(x)U'(x)-\lambda U(x)=-\delta_{z}(x),\quad x\in\mathbb{R},\label{eq:ODE-1}
\end{equation}
where \textbf{$b(x)=-\beta\mathrm{sgn}(|x|-y)\mathrm{sgn}(x)$} and
$y>0$. Besides, we know that $U(x)$ is continuous, and satisfies
\begin{equation}
\lim_{x\to+\infty}U(x)=0,\quad\lim_{x\to-\infty}U(x)=0.\label{eq:ODE-2}
\end{equation}
Sometimes we denote $U(x)=U_{z}(x)$ to emphasize the dependence on
$z$. Alternately we may directly compute the Laplace transformation
$V(x)=V_{z}(x)$ of the transition probability density $q(t,x,z)$,
which satisfies the ordinary differential equation: 
\begin{equation}
\begin{cases}
\frac{1}{2}V''(x)+\beta\mathrm{sgn}(y-x)V'(x)-\lambda V(x)=-\delta_{z}(x), & x>0\\
V'(0+)=0.
\end{cases}
\end{equation}
Solving the above equations, we obtain for any $x,y\geq0$,
\begin{equation}
V_{y}(x)=U_{y}(x)+U_{-y}(x)=\begin{cases}
C_{1}e^{-(\beta+\sqrt{\beta^{2}+2\lambda})x}+C_{2}e^{-(\beta-\sqrt{\beta^{2}+2\lambda})x}, & 0\leq x\leq y,\\
C_{3}e^{(\beta-\sqrt{\beta^{2}+2\lambda})x}, & x\geq y,
\end{cases}\label{eq:Laplace-r1}
\end{equation}
where 
\begin{align}
C_{1} & =\left[(\beta+\bar{\beta})e^{-(\beta-\bar{\beta})y}-\beta e^{-(\beta+\bar{\beta})y}\right]^{-1},\label{eq:Laplace-r11}\\
C_{2} & =\frac{\beta+\bar{\beta}}{2\lambda e^{-(\beta-\bar{\beta})y}-(\bar{\beta}-\beta)\beta e^{-(\beta+\bar{\beta})y}},\label{eq:Laplace-r12}\\
C_{3} & =\frac{(\bar{\beta}-\beta)e^{-2\beta y}+(\bar{\beta}+\beta)e^{-2(\beta-\bar{\beta})y}}{2\lambda e^{-(\beta-\bar{\beta})y}-(\bar{\beta}-\beta)\beta e^{-(\beta+\bar{\beta})y}},\label{eq:Laplace-r13}
\end{align}
and $\bar{\beta}=\sqrt{\beta^{2}+2\lambda}$.

If the Laplace transformation is $F(\lambda)=\int_{0}^{\infty}e^{-\lambda t}f(t)dt$,
the inverse Laplace transformation is denoted by 
\[
\mathcal{L}_{\lambda}^{-1}(F(\lambda))=:f(t).
\]
Then, for any $x,y\geq0$, the transition probability density $q(t,x,y)$
of the reflected diffusion (\ref{eq:bang-bang-reflecting}) is then
the inverse Laplace transformation: 
\begin{equation}
q(t,x,y)=\mathcal{L}_{\lambda}^{-1}\left(V_{y}(x)\right).\label{eq:inverse-ptxy}
\end{equation}
So we may conclude the above computations as the following theorem. 
\begin{thm}
The Laplace transformation of the transition probability density $q(t,x,y)$
of the reflected diffusion (\ref{eq:bang-bang-reflecting}) is the
function $V_{y}(x)$ in (\ref{eq:Laplace-r1}) with coefficients (\ref{eq:Laplace-r11})-(\ref{eq:Laplace-r13}). 
\end{thm}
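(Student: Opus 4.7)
The plan is to reduce the theorem to a linear boundary value problem for the Laplace transform $V_y(x) = \int_0^\infty e^{-\lambda t}q(t,x,y)\,dt$. The Gaussian upper bounds available for reflecting diffusions with bounded drift (cf.\ \cite{Aronson68,Stroock-SemPaper1988}) ensure that $V_y(x)$ is well defined and continuous in $x \geq 0$ for every $\lambda > 0$, and that we may differentiate under the integral away from the singular point $x=y$.

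First, I would apply the Laplace transform in $t$ to the Kolmogorov backward equation
\[
\partial_t q(t,x,y) = \tfrac{1}{2}\partial_x^2 q(t,x,y) + \beta\,\mathrm{sgn}(y-x)\,\partial_x q(t,x,y), \quad t>0,\ x>0,
\]
with Neumann condition $\partial_x q(t,0+,y) = 0$ and initial datum $q(0+,x,y) = \delta_y(x)$. Integration by parts in $t$ produces the stationary equation
\[
\tfrac{1}{2}V_y''(x) + \beta\,\mathrm{sgn}(y-x)\,V_y'(x) - \lambda V_y(x) = -\delta_y(x), \quad x>0,
\]
together with $V_y'(0+) = 0$ and $V_y(x) \to 0$ as $x \to \infty$. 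I would then solve the ODE on each of the two half-intervals of constant drift. On $(0,y)$ the characteristic roots $r = -\beta \pm \bar{\beta}$ yield $V_y(x) = C_1 e^{-(\beta+\bar{\beta})x} + C_2 e^{-(\beta-\bar{\beta})x}$, while on $(y,\infty)$ integrability at infinity forces $V_y(x) = C_3 e^{(\beta - \bar{\beta})x}$, since $\beta+\bar{\beta}>0$ and $\beta-\bar{\beta}<0$ for $\lambda>0$.

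The three unknown constants are then determined by three linear matching conditions: the Neumann condition $V_y'(0+) = 0$, continuity $V_y(y-) = V_y(y+)$, and the jump relation $V_y'(y+) - V_y'(y-) = -2$ obtained by integrating the ODE across $x=y$ (the drift term $\beta\,\mathrm{sgn}(y-x)V_y'(x)$ contributes nothing in the limit since $V_y'$ is locally bounded). Solving this $3\times 3$ linear system produces the coefficients (\ref{eq:Laplace-r11})--(\ref{eq:Laplace-r13}). The main obstacle is not conceptual but computational bookkeeping: the solution mixes the two exponential scales $e^{\pm\bar{\beta}y}$ and $e^{\pm\beta y}$ in a way that makes sign errors easy, and one should also verify that the resulting $V_y$ agrees with the decomposition $U_y(x) + U_{-y}(x)$ claimed in (\ref{eq:Laplace-r1}); a convenient sanity check is the limit $y\to 0$, in which $V_y$ must reduce to the Laplace transform of (\ref{eq:density1})--(\ref{eq:density2}) with $z=0$.
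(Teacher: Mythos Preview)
Your proposal is correct and follows essentially the same route as the paper: both reduce the problem to the resolvent ODE $\tfrac{1}{2}V'' + \beta\,\mathrm{sgn}(y-x)V' - \lambda V = -\delta_y$ on $[0,\infty)$ with Neumann condition at $0$ and decay at infinity, and solve it piecewise with the matching conditions you list. The only cosmetic difference is that the paper motivates this ODE via evaluation at an independent exponential time (the randomized-time device of \cite{Boro-Salm02,Mansuy-Yoy2008}) after first observing that the Cameron--Martin/local-time route stalls, whereas you obtain it directly by Laplace-transforming the backward Kolmogorov equation; the resulting boundary value problem and its solution are identical.
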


Even though it is not easy to work out a closed analytic form of the
transition probability density $q(t,x,y)$ of the reflecting diffusion
(\ref{eq:bang-bang-reflecting}), by the numerical method for the
computation of inverse Laplace transformation, see for example \cite{AW06},
we can get the precise value of the transition probability density
$q(t,x,y)$ for any $\beta\in\mathbb{R}$, $t>0$ and $x,y\geq0$.
The numerical test for (\ref{eq:inverse-ptxy}) reveals that the reflecting
bang-bang diffusion processes (\ref{eq:bang-bang-reflecting}) are
not the optimal diffusion process except $y=0$. 

\section{The HJB equation-One dimensional case\label{sec:HJB}}

The solution $w(t,x)$ to the HJB equation (with reflecting boundary)
(\ref{eq:basiceq1-1}, \ref{eq:bdcod1}) plays the dominated role
in our discussion, thus it is interesting to look for its properties
in order to gain further knowledge about the optimal probability $q_{b}(t,x;T,y)$
where $|b|\leq\kappa$. We still consider the case where $G=[0,\infty)$.
The solution for the case where $y=0$ has been obtained in the previous
section. Therefore, in this section we assume that $y>0$. 

Let $\beta(=\pm\kappa)$ be a constant. Recall that, for one dimensional
case with $G=[0,\infty)$, the HJB equation for our optimization problem
is the boundary problem 

\begin{equation}
\frac{\partial}{\partial t}w=\frac{1}{2}\Delta w+\beta|\nabla w|\;\textrm{ for }t>0\textrm{ and }x\geq0\label{eq:basiceq1-1-3}
\end{equation}
subject to the initial and boundary conditions that
\begin{equation}
\lim_{x\downarrow0}\frac{\partial}{\partial x}w(t,x)=0\quad\textrm{ for }t>0,\;\textrm{ and }w(0,x)=\delta_{y}(x).\label{eq:bdcod1-2}
\end{equation}
The solution $w(t,x)>0$ for all $t>0$ and $x\geq0$ by the maximal
principle and $w_{x}(t,x)=\frac{\partial}{\partial x}w(t,x)$ (for
$t>0$ and $x\geq0$) is H\"{o}lder continuous in $t>0$ and $x\geq0$. 

To gain more explicit information about the optimal bounds in (\ref{eq:com1}),
we need to understand the space derivative $\frac{\partial}{\partial x}w(t,x)$.
For $t=\tau>0$ is sufficiently small 
\[
w(\tau,x)\cong\frac{1}{\sqrt{2\pi\tau}}\left\{ e^{-\frac{(x-y)^{2}}{2\tau}}+e^{-\frac{(x+y)^{2}}{2\tau}}\right\} 
\]
and
\[
w_{x}(\tau,x)\cong-\frac{1}{\sqrt{2\pi\tau^{3}}}e^{-\frac{(x-y)^{2}}{2\tau}}\left\{ x-y+(x+y)e^{-\frac{2xy}{\tau}}\right\} 
\]
which implies that for $\tau>0$ small enough, $w_{x}$ has exactly
one zero near $y$ other than $0$, denoted by $s(\tau)>0$.

We have plotted the figures of the derivative $\nabla w(t,x)$ for
fixed $\text{\ensuremath{\beta}}=1$ and $y=0,1,5,10$, respectively,
and $t\in[0.5,5]$ and $x\in[0,15]$ in the Figure \ref{fig_derivative}.
Figure 1 shows, as long as $y>0$, there is at most one root other
than $0$ to the equation $w_{x}(t,x)=0$ for every $t>0$.  For
$y>0$, there exists $\tau=\tau_{y,\beta}>0$, such that there is
exactly one $s(t)>0$ for every $0<t<\tau_{y,\beta}$ such that $w_{x}(t,s(t))=0$,
and for every $t\geq\tau_{y,\beta}$ there is no zero of $w_{x}(t,\cdot)$,
i.e. $w_{x}(t,x)<0$, for any $x>0$. In Figure \ref{fig_free_boundary},
we have plotted the zeros $s(t)$ for fixed $y>0$ and $\beta=1$.
The point which $s(t)$ crosses $t$-axis is the time $\tau_{y,\beta}$.
So the initial and boundary problem (\ref{eq:basiceq1-1-3}, \ref{eq:bdcod1-2})
may be equivalent to a free boundary problem.

\begin{figure}[!htbp]
\begin{centering}
\begin{minipage}[c]{7.8cm}%
 \includegraphics[width=8.2cm,height=7cm]{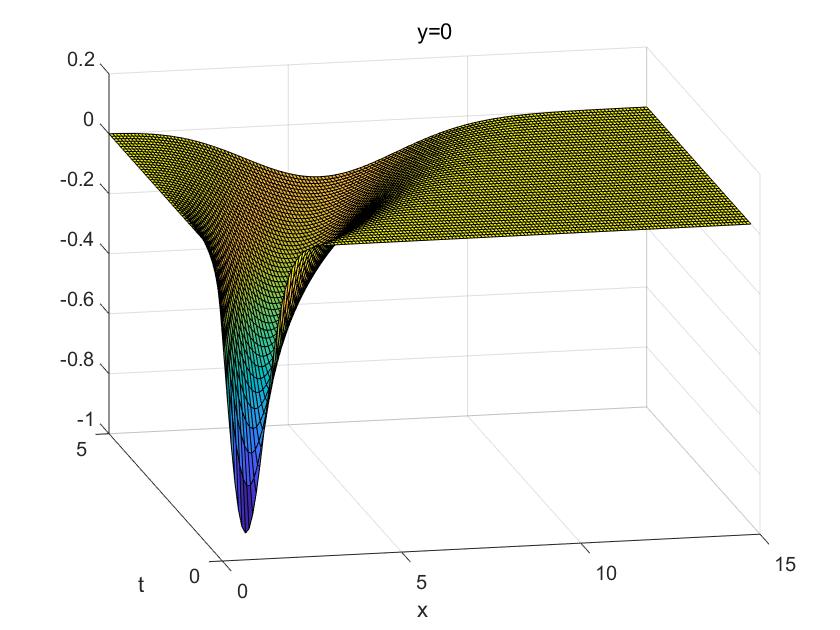} %
\end{minipage}%
\begin{minipage}[c]{7.8cm}%
 \includegraphics[width=8.2cm,height=7cm]{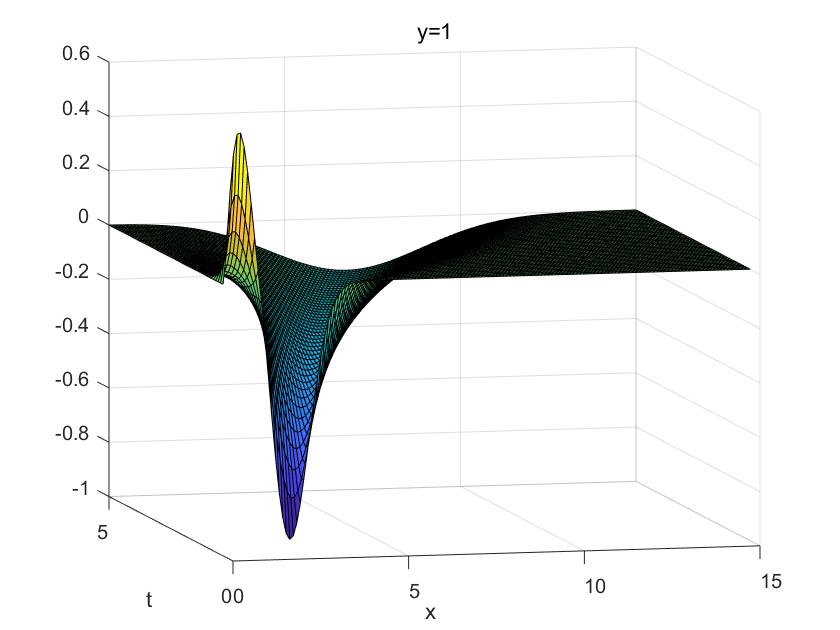} %
\end{minipage}
\par\end{centering}
\begin{centering}
\begin{minipage}[c]{7.8cm}%
 \includegraphics[width=8.2cm,height=7cm]{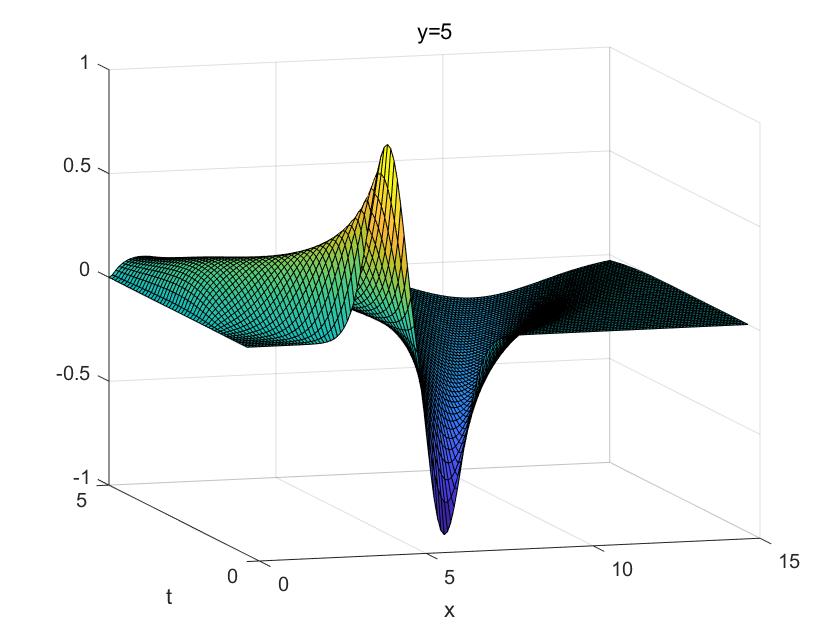} %
\end{minipage}%
\begin{minipage}[c]{7.8cm}%
 \includegraphics[width=8.2cm,height=7cm]{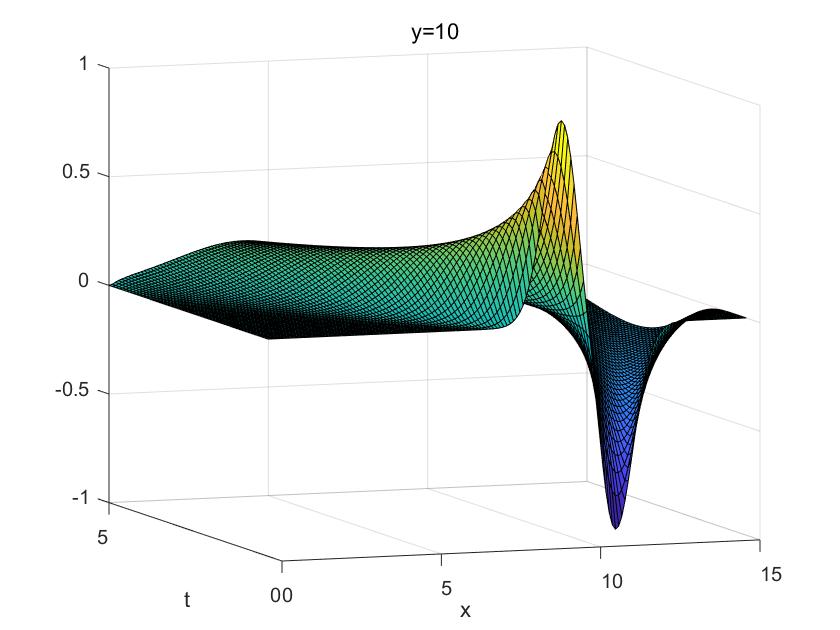} %
\end{minipage}
\par\end{centering}
\caption{Derivative $\nabla w(t,x)$}
\label{fig_derivative} 
\end{figure}

\begin{figure}[!htbp]
\begin{centering}
\begin{minipage}[c]{7.8cm}%
 \includegraphics[width=8.2cm,height=7cm,keepaspectratio]{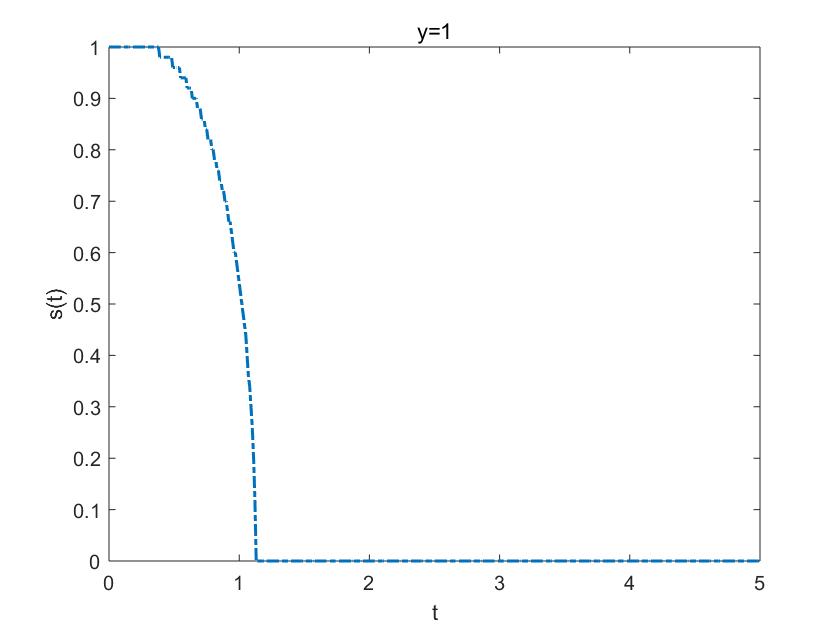} %
\end{minipage}%
\begin{minipage}[c]{7.8cm}%
 \includegraphics[width=8.2cm,height=7cm,keepaspectratio]{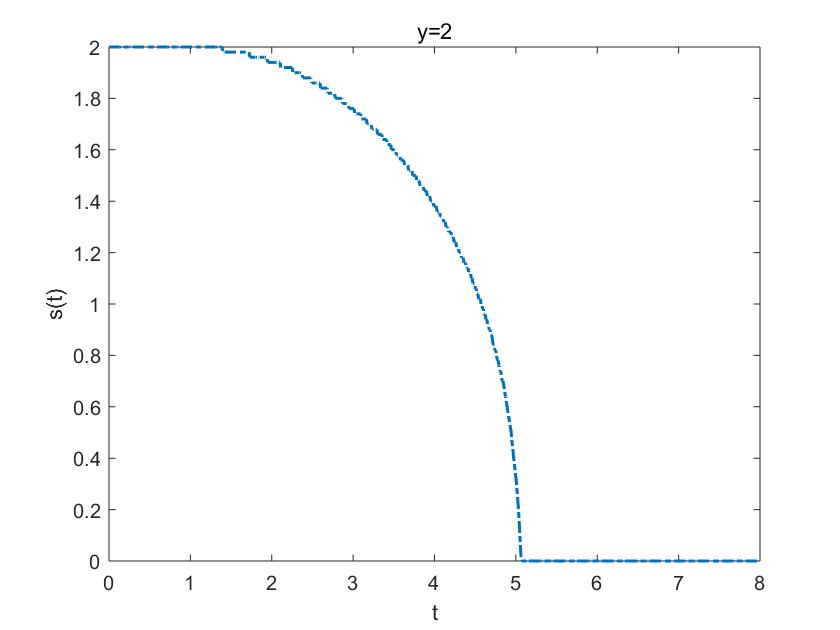} %
\end{minipage}
\par\end{centering}
\begin{centering}
\begin{minipage}[c]{7.8cm}%
 \includegraphics[width=8.2cm,height=7cm,keepaspectratio]{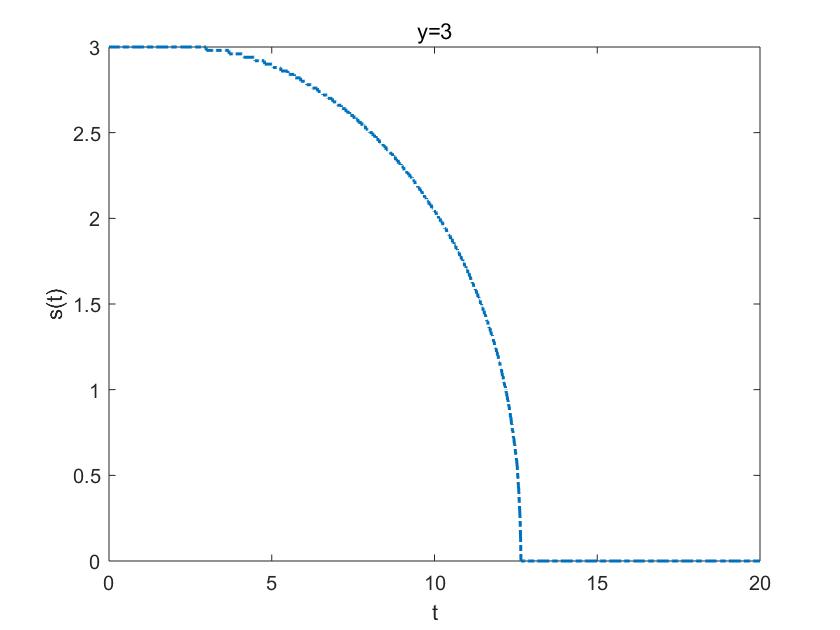} %
\end{minipage}%
\begin{minipage}[c]{7.8cm}%
 \includegraphics[width=8.2cm,height=7cm,keepaspectratio]{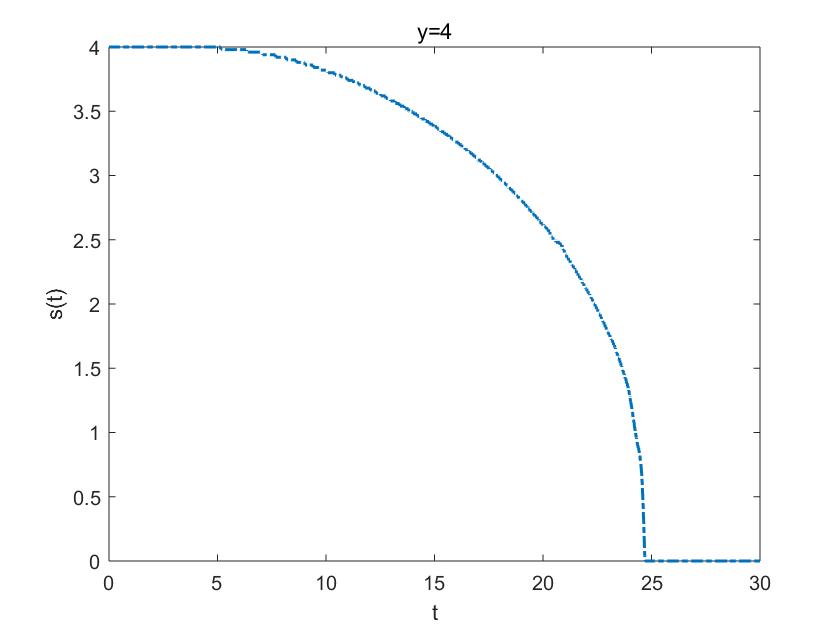} %
\end{minipage}
\par\end{centering}
\caption{Free boundary $s(t)$ for fixed $y>0$ demonstrating feature of ``phase
transition''}
\label{fig_free_boundary} 
\end{figure}

\newpage{}

\section{Application in Stochastic Optimal Control\label{sec:Stochastic-Control} }

In this section, we consider a stochastic optimal control problem
related to reflecting diffusion processes. Let 
\begin{equation}
X_{t}=x+W_{t}+\int_{0}^{t}u_{s}ds+L_{t}^{u}
\end{equation}
be a diffusion type process reflecting at zero, where $u$ is adapted
and satisfies $|u|_{\infty}\leq\kappa$ on the time interval $\mathbb{R}$.
We denote all these controls $u$ as an admissible set $\mathscr{U}$.
One problem is to minimize the cost functional 
\[
J(u)=\mathbb{E}_{x}\left[\int_{0}^{T}f(t,X_{t},u_{t})dt+h(X_{T})\right]
\]
by choosing an optimal $u\in\mathscr{U}$. Our interest in this paper
is to minimize the following expected discounted cost with infinite
horizon: 
\[
J(u)=\mathbb{E}_{x}\int_{0}^{\infty}e^{-\lambda t}f(X_{t})dt,
\]
where we take $T=\infty$, and $h=0$. The problem has been studied
in e.g. \cite{BSW80,KS84,KS91,Shreve81} for diffusion processes with
different constraints. Here we consider the case with the reflecting
boundary conditions.
\begin{thm}
\label{thm:stochastic-control}Let $f(x)$ be of at most polynomial
growth, and let 
\begin{equation}
v(x)=\inf_{u\in\mathscr{U}}\mathbb{E}_{x}\int_{0}^{\infty}e^{-\lambda t}f(X_{t})dt.
\end{equation}
Then $v(x)$ is the solution to the ordinary differential equation
\begin{align}
\frac{1}{2}v''+f(x) & =\kappa|v'|+\lambda v,\label{eq:control-v1}\\
v'(0+) & =0,\label{eq:control-v2}
\end{align}
on $[0,\infty)$, with at most polynomial growth when $x$ is large
enough. 
\end{thm}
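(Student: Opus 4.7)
The plan is the standard dynamic-programming/verification approach, adapted to the reflecting boundary at $0$. First I would derive the HJB equation heuristically from Bellman's principle and then convert this into a verification theorem that identifies the value function with the unique polynomially growing solution of the ODE.

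Step one: formal derivation. Fix $u\in\mathscr{U}$ and apply the dynamic programming principle: for any stopping time $\tau$,
\begin{equation}
v(x)\;\leq\;\mathbb{E}_{x}\!\left[\int_{0}^{\tau}e^{-\lambda t}f(X_{t})\,dt+e^{-\lambda\tau}v(X_{\tau})\right],
\end{equation}
with equality when $u$ is optimal. Assuming $v\in C^{2}([0,\infty))$, apply It\^{o}'s formula to $e^{-\lambda t}v(X_{t})$ along the reflecting diffusion $X_{t}=x+W_{t}+\int_{0}^{t}u_{s}\,ds+L_{t}^{u}$. The local-time term contributes $\int_{0}^{t}e^{-\lambda s}v'(0)\,dL_{s}^{u}$, which vanishes because $v'(0+)=0$, so
\begin{equation}
e^{-\lambda t}v(X_{t})-v(x)=\int_{0}^{t}e^{-\lambda s}\!\left[\tfrac{1}{2}v''(X_{s})+u_{s}v'(X_{s})-\lambda v(X_{s})\right]ds+\text{martingale}.
\end{equation}
Substituting into the DPP, taking $\tau\downarrow0$, and pointwise minimizing over $|u|\leq\kappa$ produces $\inf_{|u|\leq\kappa}\{uv'(x)\}=-\kappa|v'(x)|$, leading to
\begin{equation}
\tfrac{1}{2}v''(x)-\kappa|v'(x)|-\lambda v(x)+f(x)=0,
\end{equation}
which is (\ref{eq:control-v1}). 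The Neumann condition (\ref{eq:control-v2}) is forced at the boundary so that the local-time term in It\^{o}'s expansion does not produce an uncontrollable signed contribution; equivalently, reflecting at $0$ makes any smooth candidate $v$ satisfying the DPP automatically satisfy $v'(0+)=0$.

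Step two: verification. Let $\tilde{v}\in C^{2}([0,\infty))$ be a solution of (\ref{eq:control-v1})--(\ref{eq:control-v2}) of at most polynomial growth (existence of such a $\tilde{v}$ follows from the general ODE theory for this second-order equation, or by a direct construction on the regions $\{\tilde{v}'>0\}$ and $\{\tilde{v}'<0\}$ where the equation is linear with constant drift $\pm\kappa$). For arbitrary $u\in\mathscr{U}$, It\^{o}'s formula plus $\tfrac{1}{2}\tilde{v}''+u_{t}\tilde{v}'-\lambda\tilde{v}\geq -\kappa|\tilde{v}'|+u_{t}\tilde{v}'-\lambda\tilde{v}\geq-f(X_{t})$ gives, after taking expectations and using the polynomial growth of $\tilde{v}$ together with the moment bounds $\mathbb{E}_{x}|X_{t}|^{p}\leq C(1+|x|^{p}+t^{p/2})$ for the reflecting diffusion to kill the discounted boundary term $e^{-\lambda T}\mathbb{E}_{x}\tilde{v}(X_{T})\to 0$ as $T\to\infty$,
\begin{equation}
\tilde{v}(x)\;\leq\;\mathbb{E}_{x}\!\int_{0}^{\infty}e^{-\lambda t}f(X_{t})\,dt.
\end{equation}
Equality is attained by the Markovian feedback $u_{t}^{\ast}=-\kappa\,\mathrm{sgn}(\tilde{v}'(X_{t}))$, provided the corresponding reflecting SDE has a (weak) solution, which follows from Stroock--Varadhan since the drift is bounded and Borel. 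Taking the infimum over $\mathscr{U}$ gives $\tilde{v}(x)\leq v(x)$, and evaluating the cost at $u^{\ast}$ gives the reverse inequality, so $v=\tilde{v}$ and $v$ solves (\ref{eq:control-v1})--(\ref{eq:control-v2}).

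The main obstacle is regularity: the classical verification argument requires $v\in C^{2}$, but the nonlinearity $\kappa|v'|$ is only Lipschitz, so a priori one only expects $v\in C^{1,1}$. I would handle this by working with the solution $\tilde{v}$ of the ODE directly (which is $C^{2}$ away from zeros of $\tilde{v}'$ and $C^{1}$ across them, with a jump in $v''$ accommodated by a standard mollification/Krylov's It\^{o}--Tanaka argument), and by proving uniqueness of polynomially growing solutions through a comparison principle for the HJB equation. The polynomial-growth assumption on $f$ is used both to ensure finiteness of the cost for every $u\in\mathscr{U}$ and to obtain the transversality condition $e^{-\lambda T}\mathbb{E}_{x}\tilde{v}(X_{T})\to 0$ that closes the verification step.
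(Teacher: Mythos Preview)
Your verification argument is correct and is essentially the paper's own proof: apply It\^{o}'s formula to $M_t=e^{-\lambda t}v(X_t)+\int_0^t e^{-\lambda s}f(X_s)\,ds$, use the HJB inequality together with $v'(0+)=0$ to see that $M_t$ is a submartingale for every admissible $u$, then attain equality with the feedback $u^{*}_t=-\kappa\,\mathrm{sgn}(v'(X_t))$. One slip to fix: your displayed chain should read $\tfrac{1}{2}\tilde v''+u_t\tilde v'-\lambda\tilde v\ge\tfrac{1}{2}\tilde v''-\kappa|\tilde v'|-\lambda\tilde v=-f$ (the intermediate expression you wrote drops the $\tfrac{1}{2}\tilde v''$); the paper otherwise proceeds exactly as you do, though it simply asserts $v\in C^{2}$ and passes $t\to\infty$ without the transversality and regularity discussion you carefully add.
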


\begin{proof}
The equations (\ref{eq:control-v1}) and (\ref{eq:control-v2}), together
with the polynomial growth at infinity, has a unique classical solution
$v(x)\in C^{2}([0,\infty))$. Let $V_{u}(x)=\mathbb{E}_{x}\int_{0}^{\infty}e^{-\lambda t}f(X_{t})dt$,
we will show that the solution $v(x)=\inf_{u\in\mathscr{U}}V_{u}(x)$.
Define the process 
\begin{equation}
M_{t}=e^{-\lambda t}v(X_{t})+\int_{0}^{t}e^{-\lambda s}f(X_{s})ds.
\end{equation}
By Itô formula, we have 
\begin{align*}
M_{t} & =M_{s}+\int_{s}^{t}e^{-\lambda r}\left(-\lambda v(X_{r})+u_{r}v'(X_{r})+\frac{1}{2}v''(X_{r})+f(X_{r})\right)dr\\
 & \quad\quad+\int_{s}^{t}e^{-\lambda r}v'(X_{r})dW_{r}+\int_{s}^{t}e^{-\lambda r}v'(X_{r})dL_{r}^{u},
\end{align*}
for any $s<t$. Since 
\[
\begin{aligned} & \quad-\lambda v+u_{r}v'+\frac{1}{2}v''+f\\
 & \geq-\lambda v+\inf_{u\in\mathscr{U}}(u_{r}v')+\frac{1}{2}v''+f\\
 & =-\lambda v-\kappa|v'|+\frac{1}{2}v''+f=0,
\end{aligned}
\]
and the support of $L_{\cdot}^{u}$ is $\{t\geq0:\ X_{t}=0\}$ a.s.,
and $v'(0+)=0$, so we have 
\[
M_{t}\geq M_{s}+\int_{s}^{t}e^{-\lambda r}v'(X_{r})dW_{r}.
\]
Thus, 
\begin{equation}
\mathbb{E}_{x}[M_{t}|\mathcal{F}_{s}]\geq M_{s},\quad\textrm{for}\ \forall\ s\leq t.
\end{equation}
That is, $M_{t}$ is a submartingale. So 
\[
\mathbb{E}_{x}M_{t}=e^{-\lambda t}\mathbb{E}_{x}v(X_{t})+\mathbb{E}_{x}\int_{0}^{t}e^{-\lambda s}f(X_{s})ds\geq M_{0}=v(x).
\]
Let $t\to\infty$, then 
\begin{equation}
\mathbb{E}_{x}\int_{0}^{\infty}e^{-\lambda t}f(X_{t})dt\geq v(x),\quad\textrm{for all}\ u\in\mathscr{U}.
\end{equation}
On the other hand, by taking 
\begin{equation}
u_{t}^{*}=-\kappa\mathrm{sgn}(v'(X_{t}))\in\mathscr{U},
\end{equation}
similarly we know that $M_{t}$ is a martingale and $\mathbb{E}_{x}M_{t}=v(x)$
for any $t\geq0$. So 
\begin{equation}
v(x)=\mathbb{E}_{x}\int_{0}^{\infty}e^{-\lambda t}f(X_{t}^{*})dt\geq\inf_{u\in\mathscr{U}}\mathbb{E}_{x}\int_{0}^{\infty}e^{-\lambda t}f(X_{t})dt,
\end{equation}
where 
\[
X_{t}^{*}=x+W_{t}+\int_{0}^{t}u_{s}^{*}ds+L_{t}^{u^{*}}.
\]
Therefore, we have completed the proof. Besides, we also know that
$u^{*}$ is the optimal stochastic control for our problem. 
\end{proof}
In fact we may obtain the explicit solution for the stochastic optimal
control problem by using some simple algebra for the cases where $f(x)=x$
and $f(x)=x^{2}$. 

If $f(x)=x$, then we have the value function
\begin{equation}
v(x)=\frac{e^{(\kappa-\sqrt{\kappa^{2}+2\lambda})x}}{\lambda(-\kappa+\sqrt{\kappa^{2}+2\lambda})}+\frac{x}{\lambda}-\frac{\kappa}{\lambda^{2}},\quad\textrm{on}\ [0,\infty).
\end{equation}

If $f(x)=x^{2}$, the value function $v(x)$ is 
\begin{equation}
v(x)=\frac{2\kappa e^{(\kappa-\sqrt{\kappa^{2}+2\lambda})x}}{\lambda^{2}(\kappa-\sqrt{\kappa^{2}+2\lambda})}+\frac{x^{2}}{\lambda}-\frac{2\kappa x}{\lambda^{2}}+\frac{2\kappa^{2}+\lambda}{\lambda^{3}},\quad\textrm{on}\ [0,\infty).
\end{equation}

Moreover, we may verify that for any $x>0$, $v'(x)>0$. Indeed if
$f(x)=x$, then
\[
v'(x)=-\frac{1}{\lambda}e^{(\kappa-\sqrt{\kappa^{2}+2\lambda})x}+\frac{1}{\lambda}=\frac{1}{\lambda}\left(1-e^{(\kappa-\sqrt{\kappa^{2}+2\lambda})x}\right)>0,\quad\textrm{on}\ [0,\infty),
\]
and if $f(x)=x^{2}$, then
\[
v'(x)=\frac{2\kappa}{\lambda^{2}}e^{(\kappa-\sqrt{\kappa^{2}+2\lambda})x}+\frac{2x}{\lambda}-\frac{2\kappa}{\lambda^{2}}.
\]
Since the sign of $v'(x)$ cannot be seen directly, we look at the
second derivative $v''(x)$, that is, 
\[
\begin{aligned}v''(x) & =\frac{2\kappa}{\lambda^{2}}(\kappa-\sqrt{\kappa^{2}+2\lambda})e^{(\kappa-\sqrt{\kappa^{2}+2\lambda})x}+\frac{2}{\lambda}\\
 & =\frac{\sqrt{\kappa^{2}+2\lambda}-\kappa}{\lambda^{2}}\left[(\sqrt{\kappa^{2}+2\lambda}+\kappa)-2\kappa e^{(\kappa-\sqrt{\kappa^{2}+2\lambda})x}\right]\\
 & \geq\frac{\sqrt{\kappa^{2}+2\lambda}-\kappa}{\lambda^{2}}\left[2\kappa\left(1-e^{(\kappa-\sqrt{\kappa^{2}+2\lambda})x}\right)\right]>0.
\end{aligned}
\]
Therefore $v'(x)>0$ for $x>0$. Thus, we know that the optimal control
$u_{t}^{*}$ is the following feedback law by the proof of Theorem
\ref{thm:stochastic-control}: 
\begin{equation}
u_{t}^{*}=-\kappa\mathrm{sgn}(v'(X_{t}))=-\kappa\mathrm{sgn}(X_{t})=-\kappa\in\mathscr{U}.
\end{equation}
Hence the optimal controlled diffusion process with reflection at
zero is then 
\begin{equation}
X_{t}=x+W_{t}-\kappa\int_{0}^{t}\mathrm{sgn}(X_{s})ds+L_{t}=x+W_{t}-\kappa t+L_{t}.
\end{equation}
It is the same process as in (\ref{eq:reflected-bangbang}). For this
case, we have obtained the explicit form of the transition probability
density function $q^{\kappa}(t,x,z)$ of $X_{t}$ in the section \S\ref{sec:Reflecting-bang-bang-diffusion}.
That is,
\begin{equation}
\begin{aligned}q^{\kappa}(t,x,z) & =\frac{1}{\sqrt{2\pi t}}\left[e^{-\frac{(x-z-\kappa t)^{2}}{2t}}+e^{-\frac{(x+z+\kappa t)^{2}}{2t}+2\kappa x}\right]\\
 & \quad\quad\quad\quad\quad\quad+2\kappa e^{-2\kappa z}\int_{x+z-\kappa t}^{+\infty}\frac{1}{\sqrt{2\pi t}}e^{-\frac{u^{2}}{2t}}du,
\end{aligned}
\end{equation}
for any $x\geq0$ and $z\geq0$. 

We would like to point out that, for the problems without reflecting
barriers, similar formulas have been obtained in \cite{KS84,KS91}.

\end{document}